\def\abs#1{|#1|}
\def\norm#1{\|#1\|}
\def\wh#1{\widehat{#1}}
\newcommand{\del}{\ensuremath{D^{\alpha}}}
\newcommand{\cS}{\mathcal{S}}
\newcommand{\K}{\mathcal{K}}
\newcommand{\R}{\mathbb{R}}
\newcommand{\al}{\alpha}
\newcommand{\e}{\varepsilon}
\newcommand{\ft}{{\mathcal{F}}}
\newcommand{\re}{\mathop{\mathrm{Re}}}
\newcommand{\im}{\mathop{\mathrm{Im}}}
\newcommand{\ges}{{\gtrsim}}
\newcommand{\les}{\lesssim}
\newcommand{\I}{\infty}
\newcommand{\paren}[1]{\ensuremath{\left( #1\right)}}
\newcommand{\abso}[1]{\ensuremath{\left| #1\right|}}
\newcommand{\EQ}[1]{\begin{equation}\begin{split} #1 \end{split}\end{equation}}
\newcommand{\Del}[1]{}
\numberwithin{equation}{section}
\newtheorem{lemma}[equation]{Lemma}
\newtheorem{thm}{Theorem}[section]
\newtheorem{lem}[thm]{Lemma}
\theoremstyle{remark}
\newtheorem{rem}{Remark}
\newtheorem{defn}{Definition}[section]
\begin{document}
%\subjclass[2010]{35L70, 35Q55}
%\keywords{Nonlinear wave equation, nonlinear Schr\"odinger equation}

\title[fractional Schr\"odinger equation]{On the energy-critical fractional
Sch\"odinger equation in the radial case}

\author[Z. Guo]{Zihua Guo}
\author[Y. Sire]{Yannick Sire}
\author[Y. Wang]{Yuzhao Wang}
\author[L. Zhao]{Lifeng Zhao}

\address{Z. Guo, School of Mathematical Sciences, Peking
University, Beijing 100871, China} \email{zihuaguo@math.pku.edu.cn}

\address{Y. Sire, Universit\'{e} Aix-Marseille and LATP 9, rue F. Joliot Curie, 13453
Marseille Cedex 13, France} \email{sire@cmi.univ-mrs.fr}

\address{Y. Wang, Department of Mathematics and Physics, North China Electric Power University, Beijing 102206,
China} \email{wangyuzhao2008@gmail.com}

\address{L. Zhao, University of Science and Technology of China, Hefei, China} \email{zhaolf@ustc.edu.cn}

\begin{abstract}
We consider the Cauchy problem for the energy-critical nonlinear
Schr\"odinger equation with fractional Laplacian (fNLS) in the
radial case. We obtain global well-posedness and scattering in the
energy space in the defocusing case, and in the focusing case with
energy below the ground state.
\end{abstract}

\maketitle

%\tableofcontents

\section{Introduction}

In this paper, we study the Cauchy problem for the nonlinear
Schr\"odinger equation with fractional Laplacian:
\begin{equation}\label{eq:fNLS}
\begin{cases}
    i\partial_tu + D^{2\alpha} u +\mu |u|^{\frac{4\alpha}{N-2\alpha}}u = 0 &(x,t)\in \R^N\times \R \\
    u|_{t=0} = u_0\in \dot{H}^{\alpha}(\R^N),
\end{cases}
\end{equation}
where $\alpha\in (\frac{N}{2N-1},1), D=\sqrt{-\Delta}$, $\mu\in
\{-1,1\}$. Here $\mu=1$ corresponds to the defocusing case, and
$\mu=-1$ corresponds to the focusing case. When $\alpha=1$,
\eqref{eq:fNLS} is the well-known energy-critical nonlinear
Schr\"odinger equation which has been extensively studied, and we
refer the readers to \cite{Visan} for a survey of the study. When
$0<\alpha<1$, \eqref{eq:fNLS} is a nonlocal model known as nonlinear
fractional Schr\"odinger equation which has also attracted much
attentions recently (see
\cite{GHX,GH,GH2,CHKL1,CHKL2,GH3,CHHO,CHHO2}). The fractional
Schr\"odinger equation is a fundamental equation of fractional
quantum mechanics, which was derived by Laskin
\cite{Laskin1,Laskin2} as a result of extending the Feynman path
integral, from the Brownian-like to L\'{e}vy-like quantum mechanical
paths. The purpose of this paper is to prove some analogue global
well-posedness and scattering for \eqref{eq:fNLS}  in the radial
case.

Under the flow of the equation \eqref{eq:fNLS}, the following
quantities (mass and energy) are conserved:
\begin{align*}
M(u)=&\int_{\R^N} |u(x,t)|^2dx,\\
E_\mu(u)=&\int_{\R^N}\frac{1}{2}{|D^\alpha u|^2}+\frac{\mu
}{p+2}|u|^{p+2}dx.
\end{align*}
We write $E_\pm (u)=E_{\pm 1}(u)$. Moreover, the equation
\eqref{eq:fNLS} preserves the radial symmetry, and also has the
following scaling invariance: for $\lambda>0$
\[u(x,t)\to \lambda^{\frac{N-2\alpha}{2}} u(\lambda x,\lambda^{2\alpha} t), \quad u_0(x)\to \lambda^{\frac{N-2\alpha}{2}} u_0(\lambda x).\]
Thus, \eqref{eq:fNLS} is $\dot{H}^{\alpha}$-critical,
since the scaling transform leaves $\dot{H}^{\alpha}$-norm invariant.

There are remarkable differences between the defocusing and focusing
cases. In the focusing case, the flow has more kinds of dynamical
behavior. An important role is played by the ground state $W_\al$,
namely the unique non-negative radial solution to the fractional
elliptic equation
\begin{equation}\label{eq:fell}
(-\Delta)^{\alpha} W - \abs{W}^{\frac{4\alpha}{N-2\alpha}}W = 0.
\end{equation}
We have $W_\al\in \dot H^\alpha$, and so $W_\al$ is a stationary
solution to \eqref{eq:fNLS} when $\mu=-1$. See section 3 for more
properties of $W_\al$. The main result of this paper is

\begin{thm}\label{thm}
Assume $N\geq 2$, $\alpha\in (\frac{N}{2N-1},1)$, $2\alpha < N <
6\alpha$, $W_\al$ as above. Assume $u_0\in \dot{H}^\alpha$, $u_0$
radial. Then

\begin{enumerate}
\item Defousing case ($\mu=1$): \eqref{eq:fNLS} is globally
well-posed, and scattering holds.

\item Focusing case ($\mu=-1$): if $E_-(u_0)<E_-(W_\al)$ and $\norm{\del
u_0}_2<\norm{\del W_\al}_2$, then \eqref{eq:fNLS} is globally
well-posed, and scattering holds.
\end{enumerate}
\end{thm}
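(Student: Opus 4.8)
The plan is to run the concentration--compactness and rigidity scheme of Kenig--Merle, adapted to the nonlocal operator $D^{2\alpha}$ and to the radial class.

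\emph{Linear theory, local well-posedness, stability.} I would first record the Strichartz estimates for the fractional propagator $e^{itD^{2\alpha}}$. Because the dispersion is frequency dependent when $\alpha\neq 1$, the usable range of Strichartz exponents is too narrow for unrestricted data to support an $\dot H^{\alpha}$-critical theory; the hypotheses $\alpha\in(\tfrac{N}{2N-1},1)$ and $2\alpha<N<6\alpha$ are exactly what is needed for the \emph{radial} (weighted) Strichartz estimates to hold on a workable range. With these in hand I would set up: (i) an $\dot H^{\alpha}$-critical local well-posedness theory with a scattering criterion (finiteness of a suitable space--time norm); (ii) small-data global well-posedness and scattering; and (iii) a long-time perturbation/stability lemma. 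From Section 3 I would import the sharp fractional Gagliardo--Nirenberg inequality with optimizer $W_\alpha$ and, in the focusing case, the resulting energy-trapping dichotomy: the set $\{u_0 : E_-(u_0)<E_-(W_\al),\ \|D^{\alpha}u_0\|_2<\|D^{\alpha}W_\al\|_2\}$ is invariant, $\|D^{\alpha}u(t)\|_2^2\le(1-\delta)\|D^{\alpha}W_\al\|_2^2$ on the whole maximal interval (which then gives global existence of the flow), and the virial functional is bounded below by $\delta\|D^{\alpha}W_\al\|_2^2$. In the defocusing case the virial functional is $\gtrsim\|D^{\alpha}u\|_2^2$ for free.

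\emph{Construction of a critical element.} Assume the theorem fails, and let $E_c$ be the supremum of the energies (in the focusing case, of $E_-$) such that every radial datum with energy below $E_c$ --- and, in the focusing case, with $\|D^{\alpha}u_0\|_2<\|D^{\alpha}W_\al\|_2$ --- scatters; by small-data theory $E_c>0$, and failure means $E_c<\infty$ (defocusing) resp. $E_c\le E_-(W_\al)$ (focusing). Pick non-scattering radial solutions $u_n$ with $E(u_n)\downarrow E_c$. I would prove a linear profile decomposition for $\dot H^{\alpha}$-bounded radial sequences --- the relevant symmetries being the scaling action and time translations, since spatial translations are incompatible with radial symmetry and may be discarded --- and combine it with the stability lemma to show that exactly one profile survives. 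This produces a \emph{critical element}: a nonzero radial solution $u_c$ on its maximal interval with $E(u_c)=E_c$, not scattering, whose orbit $\{u_c(t)\}$ is precompact in $\dot H^{\alpha}$ modulo the scaling action, with a scale function $N(t)$. One is then reduced to two enemies: the \emph{soliton-like} case ($N(t)$ comparable to a constant) and the \emph{frequency-cascade} case ($N(t)\to\infty$ along a sequence; a finite maximal interval is subsumed here).

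\emph{Rigidity.} To finish I would exclude both enemies. In the cascade case, almost periodicity forces the $\dot H^{\alpha}$-mass of $u_c$ to escape to arbitrarily high frequencies while the lower Sobolev norms (in particular a suitably truncated $L^2$-type quantity, controlled here because $\alpha<1$ and $2\alpha<N$ together with radiality keep $u_c$ locally square-integrable with a uniform tail bound) stay bounded; a monotonicity/conservation argument, using that no $\dot H^{\alpha}$-mass can accumulate at frequency zero, yields $u_c\equiv 0$. In the soliton case I would use a \emph{truncated virial/Morawetz identity}: for a radial weight $a_R(x)=R^2 a(x/R)$ with $a(x)=|x|^2$ near the origin, put $z_R(t)=\int a_R|u_c|^2\,dx$, which by precompactness is finite and bounded in $t$; computing $\tfrac{d^2}{dt^2}z_R(t)$ and handling the nonlocal commutator $[a_R,D^{2\alpha}]$ via the Balakrishnan/heat-semigroup representation of $D^{2\alpha}$, the leading term is the virial functional --- bounded below by $\delta\|D^{\alpha}W_\al\|_2^2>0$ in the trapped focusing region, and $\gtrsim\|D^{\alpha}u_c\|_2^2>0$ in the defocusing case --- while the commutator and far-field contributions are $o(1)$ as $R\to\infty$, uniformly in $t$, thanks to compactness. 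Integrating $z_R''\ges c>0$ over a long interval contradicts the boundedness of $z_R$. Hence $u_c\equiv 0$, a contradiction, and the theorem follows.

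The crux is the nonlocal virial estimate in the rigidity step: unlike $\Delta$, the operator $D^{2\alpha}$ does not commute gracefully with multiplication by $x$ or with spatial cutoffs, so $[a_R,D^{2\alpha}]$ and the attendant error terms must be estimated by hand --- and it is here, together with the radial Strichartz inputs, that the arithmetic restrictions $\alpha\in(\tfrac{N}{2N-1},1)$ and $2\alpha<N<6\alpha$ are consumed, the bound $N<6\alpha$ in particular keeping the nonlinear Morawetz contribution favorably signed (with $p=\tfrac{4\alpha}{N-2\alpha}>1$). A secondary difficulty is making the profile decomposition and stability theory quantitatively compatible with the narrower, radially restricted Strichartz class.
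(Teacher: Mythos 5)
Your roadmap is the one the paper follows: radial loss-free Strichartz estimates in the $\alpha$-range $(\tfrac{N}{2N-1},1)$, the attendant critical local theory and stability lemma, energy trapping from the sharp Sobolev inequality with optimizer $W_\alpha$, a linear profile decomposition modulo scaling and time translations producing a critical element compact modulo a scale function $\lambda(t)$, and a truncated virial to kill it. Two items in your rigidity step warrant flags.

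First, the assertion that $N<6\alpha$ is what keeps the nonlinear Morawetz contribution favorably signed is not correct. Since $p+2=\tfrac{2N}{N-2\alpha}=2^*$ and $\tfrac{Np}{p+2}=2\alpha$, the localized virial derivative has leading term $2\alpha\bigl(\int|D^\alpha u|^2 - \int |u|^{2^*}\bigr)$; in the defocusing case this is trivially positive, and in the focusing trapped regime the coercivity $\int|D^\alpha u|^2-\int|u|^{2^*}\ge\overline\delta\int|D^\alpha u|^2$ comes from the variational lemma, independently of whether $p>1$. The actual purchase of $N<6\alpha$ is that $p=\tfrac{4\alpha}{N-2\alpha}>1$, so the nonlinearity is $C^1$; that is consumed in the stability/perturbation theorem (as in the $N\le 5$ restriction for energy-critical NLS), not in the virial.

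Second, your cascade-enemy discussion is too vague to close. The paper follows Kenig--Merle's organization rather than the Killip--Visan soliton/cascade dichotomy: after reducing to $\lambda(t)\ge A_0$, it splits on $T_+<\infty$ versus $T_+=\infty$. When $T_+<\infty$ one has $\lambda(t)\to\infty$, and a localized mass $y_R(t)=\int|u|^2\psi_R$ is controlled via a one-parameter fractional commutator bound $\|[D^\alpha,\psi_R]\cdot\|$; sending $t\uparrow T_+$ forces $u_0\in L^2$ with $\|u_0\|_2=0$, whence $u_0\equiv 0$. Your sketch replaces this by ``a monotonicity/conservation argument, using that no $\dot H^\alpha$-mass can accumulate at frequency zero,'' but no such conserved negative-regularity quantity is exhibited, and the local $L^2$ control you invoke is precisely what has to be proved. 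On the technical side, the paper works with the first-order quantity $I_R(t)=\re\int iu\,x\psi_R\cdot\nabla\bar u$ and a single time derivative, estimating the commutators $[D^\alpha,\psi_R]$ on the Fourier side with Coifman--Meyer bounds (Lemma \ref{lem:com}); your $z_R(t)=\int a_R|u|^2$ plus $z_R''$ with a Balakrishnan/heat-kernel representation of $[a_R,D^{2\alpha}]$ would also work, but costs an extra layer of commutators and requires justifying the a priori bound on $z_R$ (note $u_c$ is only in $\dot H^\alpha$, so the bound on $z_R$ must go through $\|u_c\|_{L^{2^*}}$ via H\"older, producing $R^{2+2\alpha}$ rather than the paper's $R^{2\alpha}$ for $I_R$ --- fine, but it should be said).
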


Now we discuss the ideas of proof. We follow closely the
Kenig-Merle's concentration compactness/rigidity method \cite{KM}.
There are several different ingredients:

\begin{enumerate}
  \item Radial Strichartz estimates. When $\alpha<1$, we know that the classical Strichartz estimates in non-radial case has loss of regularity.
  However, in the radial case, it was known that when $\alpha\in (\frac{N}{2N-1},1)$ one has generalized estimates which has no loss of derivatives, see \cite{GW10}. In contrast to \cite{KM}, radial symmetry for \eqref{eq:fNLS} plays crucial role in many aspects.

  \item The results from the study of the fractional elliptic equation.
  The fractional elliptic equation has been extensively studied recently.
  In the focusing case, we will apply the results for \eqref{eq:fell} which was obtained in \cite{Lieb}, \cite{CLO}.

  \item Localization of virial identity. In the rigidity argument, we use the localization of virial identity.
  Due to the nonlocal nature of $(-\Delta)^\alpha$, we need to deal with some commutator estimates.
\end{enumerate}
The main difference between \eqref{eq:fNLS} and Schr\"odinger
equation is the nonlocal property of the fractional Laplacian. In
our proof, this nonlocal property makes only slight difference from
the Kenig-Merle's argument in the concentration-compactness part
(Thus we omit most of the details). However, it makes big difference
in the space-time a-priori estimates, e.g. localization of virial
estimates in the rigidity part. We do not know any other monotonity,
such as Morawetz estimates.

\section{The Cauchy problem and the variational estimates}

\subsection{The Cauchy problem}
In this section, we review the local theory and small data global
theory for the Cauchy problem \eqref{eq:fNLS} with radial symmetry.
It has no difference between defocusing and focusing cases. The key
ingredient is the radial Strichartz estimates obtained in
\cite{GW10}.

%\begin{defn}
%Suppose $N\ge 2$. The exponent pair $(q,r)$ is said to be $n-D$ %radial Schr\"odinger admissible if $q,r \ge 2$, and
%\begin{align}\label{n-D-admissible}
%&\frac{4N+2}{2N-1} \le q \le \infty, \quad \frac2q + \frac{2N-1}{r} %\le n-\frac12\\
%or &\nonumber\\
%&2\le q< \frac{4N+2}{2N-1}, \quad
%\end{align}
%\end{defn}

\begin{lem}[Proposition 3.9 \cite{GW10}]\label{prop:StriFSch}
Suppose $N\geq 2$, $\alpha>1/2$ and $u,u_0, F$ are spherically
symmetric in space and satisfy
\begin{equation*}
\begin{cases}
    i\partial_tu + (-\Delta)^{\alpha} u =F   &(x,t)\in \R^N\times \R \\
    u|_{t=0} = u_0.
\end{cases}
\end{equation*}
Then for $\gamma \in \R$ it holds
\begin{align}
\norm{u}_{L_t^qL_x^r}+\norm{u}_{C(\R:\dot{H}^\gamma)}\les
\norm{u_0}_{\dot{H}^\gamma}+\norm{F}_{L_t^{\tilde q'}L_x^{\tilde
r'}},
\end{align}
if the following conditions hold:

(1) $(q,r)$ and $(\tilde q,\tilde r)$ both satisfy the following conditions:
\begin{align}\label{radial-admis}
2\leq q,r\leq \infty,\frac{1}{q}<
(N-\frac{1}{2})(\frac{1}{2}-\frac{1}{r});
\end{align}

(2) $\tilde q'<q$ and the ``gap"
condition:
\[\frac{2\alpha}{q}+\frac{N}{r}=\frac{N}{2}-\gamma,\ \frac{2\alpha}{\tilde q}+\frac{N}{\tilde r}=\frac{N}{2}+\gamma.\]
\end{lem}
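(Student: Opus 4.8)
I outline the proof of this radial Strichartz estimate for the fractional propagator $e^{it(-\Delta)^\al}$ (it is \cite[Proposition 3.9]{GW10}). The strategy is the usual $TT^*$ scheme; the one genuinely new input is that, on radial functions, the kernel of $e^{it(-\Delta)^\al}$ carries extra spatial decay inherited from the decay of Bessel functions, and this is exactly what enlarges the admissible range \eqref{radial-admis} beyond the classical scaling line.

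\textbf{Reduction to a single frequency.} Write $u_0=\sum_k P_k u_0$ with $P_k$ the Littlewood--Paley projection onto $\abs\x\sim 2^k$. Using the parabolic scaling $x\mapsto 2^{-k}x$, $t\mapsto 2^{-2\al k}t$ one checks that, thanks to the gap condition $\frac{2\al}q+\frac Nr=\frac N2-\ga$, it suffices to prove the frequency-localized homogeneous bound $\norm{e^{it(-\Delta)^\al}P_0 f}_{L^q_tL^r_x}\les \norm{f}_{L^2_x}$ and its dual: the scaling reinserts exactly the factor $2^{k\ga}$, so that summing over $k$ via the Littlewood--Paley inequality and Minkowski (valid since $q,r\ge2$) reconstructs $\norm{u_0}_{\dot H^\ga}$. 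The inhomogeneous estimate with the non-conjugate pair $(\tilde q,\tilde r)$ then follows from the two homogeneous estimates: by $TT^*$ and Minkowski the untruncated Duhamel operator $F\mapsto\int_\R e^{i(t-s)(-\Delta)^\al}F(s)\,ds$ maps $L^{\tilde q'}_tL^{\tilde r'}_x\to L^q_tL^r_x$, and the Christ--Kiselev lemma — whose hypothesis is precisely $\tilde q'<q$ — passes to the retarded operator $\int_{s<t}$.

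\textbf{The kernel estimate.} For radial $f$, $\wh f$ is the Hankel transform of a one-variable profile $g$, and with $\nu=\frac N2-1$,
\[
\paren{e^{it(-\Delta)^\al}P_0 f}(r)=\int_0^\I e^{it\la^{2\al}}\chi(\la)\,\frac{J_\nu(r\la)}{(r\la)^\nu}\,\la^{N-1}g(\la)\,d\la,
\]
with $\chi$ supported in $\la\sim1$. Forming the corresponding $TT^*$ kernel and inserting the uniform bound $\abs{J_\nu(s)}\les\jb s^{-1/2}$ reduces matters to a one-dimensional oscillatory integral in $\la$ with phase $t\la^{2\al}\pm(r\pm\rho)\la$; since $\al>1/2$ this phase has a nondegenerate critical point (equivalently, the group velocity $\la\mapsto\la^{2\al-1}$ is strictly monotone), so stationary phase, together with integration by parts near the endpoints of the $\la$-support, yields schematically
\[
\abs{K_t(r,\rho)}\les \jb t^{-1/2}(r\rho)^{-\frac{N-1}2}.
\]
Combining this with the trivial bound $\norm{e^{it(-\Delta)^\al}P_0 f}_{L^2}\les\norm f_2$, feeding both into a Schur-test / fractional-integration argument in the measure $r^{N-1}\,dr$, and interpolating produces precisely the range $\frac1q<(N-\tfrac12)(\tfrac12-\tfrac1r)$: the weight exponent $N-\tfrac12$ is the $N-1$ of the surface measure plus the $\tfrac12$ gained from the Bessel decay, and the inequality is strict because the borderline weighted estimate fails. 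Here $N\ge2$ is used through $\nu=\frac N2-1\ge0$; in one dimension there is no Bessel gain.

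\textbf{Main obstacle.} The functional-analytic scaffolding ($TT^*$, Christ--Kiselev, Littlewood--Paley summation) is routine. The delicate part is the sharp stationary-phase analysis of the fractional phase $\la^{2\al}$: keeping constants uniform as $\al\downarrow\tfrac12$, matching the small-argument regime $J_\nu(s)\sim s^\nu$ to the large-argument regime of the Bessel factor, and pinning down the exact endpoint in $(q,r)$. One also has to verify that the double sum over input and output frequencies in the inhomogeneous estimate converges geometrically — which is exactly what the gap condition combined with $\tilde q'<q$ guarantees.
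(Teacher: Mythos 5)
This lemma is not proved in the paper you are reading; it is imported verbatim as Proposition~3.9 of~\cite{GW10}, so the comparison has to be with the Guo--Wang argument rather than with anything in the present text.

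Your outline correctly identifies the standard architecture and matches what \cite{GW10} actually does in broad strokes: Littlewood--Paley plus parabolic scaling to reduce to the unit frequency block (with the gap condition reinserting the factor $2^{k\ga}$ and $q,r\ge2$ enabling the square-function summation), the Hankel-transform representation with the Bessel kernel $J_\nu(r\la)/(r\la)^\nu$, the uniform decay $|J_\nu(s)|\les\jb{s}^{-1/2}$ furnishing the extra spatial factor $(r\rho)^{-(N-1)/2}$, stationary phase in $\la$ where the hypothesis $\al>1/2$ guarantees nondegeneracy of $\la\mapsto\la^{2\al}$, and finally Christ--Kiselev to pass from the untruncated Duhamel operator to the retarded one — exactly where the hypothesis $\tilde q'<q$ is used. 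All of this is right, and you have correctly located the roles of $N\ge2$ and $\al>1/2$.

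The one place where the proposal is genuinely thin is the step ``combine the kernel bound with the $L^2$ bound, Schur test / fractional integration, interpolate, and out comes $\frac1q<(N-\tfrac12)(\tfrac12-\tfrac1r)$.'' The bare bound $|K_t(r,\rho)|\les|t|^{-1/2}(r\rho)^{-(N-1)/2}$ does not by itself yield an $L^{r'}_x\to L^r_x$ dispersive estimate at the rate needed: the weight $(r\rho)^{-(N-1)/2}$ does not match the measure $r^{N-1}dr$, and a direct Schur test on this product kernel fails. What actually closes the argument — and what \cite{GW10} spends most of the proof on — is the additional structural information that the kernel is essentially supported where $|r\pm\rho|\sim|t|$ (unit group speed on the support $|\xi|\sim1$), together with a dyadic decomposition in the physical radial variable and a matching treatment of the near-origin regime $r\la\lesssim1$ where $J_\nu(s)\sim s^\nu$; the local-in-time piece $|t|\lesssim1$ also has to be handled separately since there is no dispersion there. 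These are not cosmetic details: they are precisely where the exponent $N-\tfrac12$ (rather than $N/2$) emerges and where the strictness of the inequality comes from. You flag this as the ``main obstacle'' but do not supply the argument, so as written the proposal establishes the kernel estimate and the functional-analytic scaffolding but not the claimed admissibility range.
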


\begin{rem}
The conditions in (1) can be relaxed to the following
\begin{align}
2\leq q,r\leq \infty,\frac{1}{q}\leq
(N-\frac{1}{2})(\frac{1}{2}-\frac{1}{r}),\quad (q,r)\ne
(2,\frac{4N-2}{2N-3}).
\end{align}
On the boundary line $\frac{1}{q}=
(N-\frac{1}{2})(\frac{1}{2}-\frac{1}{r})$, \cite{GW10} first proved
it for $q\geq r$, and was later improved to other pairs
independently by \cite{Ke} and \cite{CL}.
\end{rem}

\begin{defn}
For $N\ge 2$, we say that a pair of exponents $(q,r)$ is
$\alpha$-admissible if $(q,r)$ verifies
\begin{align}\label{admissible}
\frac {2\alpha}q + \frac Nr = \frac N2,\quad 2\leq q,r \leq \infty.
\end{align}
\end{defn}

By Lemma \ref{prop:StriFSch}, we see that if $\al\in
(\frac{N}{2N-1},1)$, then we have a full set of $\alpha$-admissible
Strichartz estimates which has no loss of derivatives. With these
Strichartz estimates, we can proceed as the classical theory of
Schr\"odinger equation. Let $I\subset \R$ be an interval, and we
define $S_\al(I), W_\al(I)$ norm by
\[\norm{v}_{S_\al(I)} = \norm{v}_{L_I^{\frac{2(N+2\alpha)}{N-2\alpha}} L_x^{\frac{2(N+2\alpha)} {N-2\alpha}}} \  \ \mbox{and}  \   \
\norm{v}_{W_\al(I)} = \norm{v}_{L_I^{\frac{2(N+2\alpha)}{N-2\alpha}}
L_x^{\frac{2N(N+2\alpha)}{N^2+ 4\alpha^2}}}.\] Note that
$(\frac{2(N+2\alpha)}{N-2\alpha},
\frac{2N(N+2\alpha)}{N^2+4\alpha^2})$ is $\alpha$-admissible pairs.
By Sobolev embedding, we have that if $N >2\alpha$,
\[ \norm{v}_{S_\al(I)}\le C\norm{D^{\alpha}v}_{W_\al(I)}.\]

\begin{defn}
Let $t_0\in I$.  We say that $u\in C(I;\dot{H}^{\alpha}(\R^N))\cap
\{D^\alpha u\in W_\al(I)\}$ is a solution of the \eqref{eq:fNLS} if
$$u|_{t_0} = u_0, \mbox{ \ \ and \ \ } u(t) = e^{i(t-t_0)(-\Delta)^{\alpha}}u_0 + \int_{t_0}^te^{i(t-t')(-\Delta)^{\alpha}}\abs{u}^{\frac{4\alpha}{N-2\alpha}}u\,dt'.$$
\end{defn}

\begin{defn}
Let $v_0\in \dot{H}^{\alpha}$, $v(x,t) = e^{it(-\Delta)^\al}v_0$ and
let $\{t_n\}$ be a sequence, with $\lim_{n\to \infty}t_n =
\overline{t}\in [-\infty,+\infty]$.  We say that $u(x,t)$ is a
non-linear profile associated with $(v_0,\{t_n\})$ if there exists
an interval $I$, with $\overline{t}\in I$ (if $\overline{t} = \pm
\infty$, $I = [a,+\infty)$ or $(-\infty, a]$) such that $u$ is a
solution of (CP) in $I$ and
\[
    \lim_{n\to \infty}\norm{ u(-,t_n)-v(-,t_n)}_{\dot{H}^1} = 0.
\]
\end{defn}

With the Strichartz estimates, we can obtain the following results
for \eqref{eq:fNLS} by standard arguments (for example, see
\cite{CW}).

\begin{thm}

(1) Assume $N\geq 2$, $\alpha\in (\frac{N}{2N-1},1)$, $2\alpha < N <
6\alpha$ and $u_0\in \dot{H}^{\alpha}(\R^N)$, $u_0$ radial,
$\norm{u_0}_{\dot H^\al}\leq A$. Then $\exists \delta=\delta(A)$
s.t. if $\norm{e^{it(-\Delta)^\alpha} u_0}_{S_\al(I)}\leq \delta$,
$0\in \dot I$, there exists a unique solution to \eqref{eq:fNLS} on
$I$ such that $u\in C(I;\dot H^\al)$, $\sup_{t\in
I}\norm{u(t)}_{\dot H^\al}+\norm{D^\al u}_{W_\al(I)}\leq C(A)$ and
$\norm{u}_{S_\al(I)}\leq 2\delta$. Moreover, we have
\begin{itemize}
\item Local existence: there exists a maximal open interval $I=(-T_-(u_0),T_+(u_0))$ where the solution $u$
is defined.

\item Small data global existence: if $A\ll 1$, then
$I=(-\infty,+\infty)$.

\item $D^\alpha u\in L_{t}^qL_x^r(I'\times \R^N)$ for any $\alpha$-admissible pair
$(q,r)$, where $I'\subset I$ is a closed interval with finite
length.

\item Blowup criterion: If $T_+(u_0)<+\infty$, then $\|u\|_{S_\alpha([0,T_+(u_0)))} = +\infty$.
A similar statement holds in the negative time direction.

\item Scattering: If $T_+(u_0) = + \infty$ and $u$ dose not blow up forward in time, then $u$ scatters forward in time,
that is, there exists a unique $u_+\in \dot H^{\alpha}$ such that
\begin{align}\label{scattering}
\lim_{t\rightarrow +\infty}\|u(t)-
e^{it(-\Delta)^{\alpha}}u_+\|_{H^{\alpha}(\R^N)} = 0.
\end{align}A similar
statement holds in the negative time direction.
\end{itemize}

(2) For any $u_+\in \dot H^\al$, there exists a solution $u$ to
\eqref{eq:fNLS} such that \eqref{scattering} holds. As a
consequence, for any $(v_0,\{t_n\})$, there always exists a
non-linear profile associated to $(v_0,\{t_n\})$ with a maximal
interval of existence.
\end{thm}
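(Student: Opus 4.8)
The plan is to run the Cazenave--Weissler contraction scheme, with the radial estimates of Lemma~\ref{prop:StriFSch} playing the role of the usual Strichartz estimates. Write $p=\frac{4\alpha}{N-2\alpha}$, and after a time translation take $t_0=0$. On the given interval $I$ we would look for a fixed point of the Duhamel map
\[
\Phi(u)(t)=e^{it(-\Delta)^\alpha}u_0+i\int_0^te^{i(t-t')(-\Delta)^\alpha}\abs{u}^{p}u(t')\,dt'
\]
in the closed set
\[
X=\big\{u:\ \sup_{t\in I}\norm{u(t)}_{\dot H^\alpha}+\norm{\del u}_{W_\al(I)}\le C(A),\ \norm{u}_{S_\al(I)}\le 2\delta\big\},
\]
metrized by $d(u,v)=\norm{u-v}_{S_\al(I)}+\norm{\del(u-v)}_{W_\al(I)}$. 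Since $\alpha\in(\frac{N}{2N-1},1)$ — which for $N\ge2$ in particular forces $\alpha>\frac12$, so Lemma~\ref{prop:StriFSch} applies — one has a full range of $\alpha$-admissible estimates with no loss of derivatives, and notably the endpoint-type pair $(2,\frac{2N}{N-2\alpha})$, whose radial admissibility condition in Lemma~\ref{prop:StriFSch}(1) reduces to exactly $\alpha>\frac{N}{2N-1}$. Applying Strichartz to $\del\Phi(u)$ and to $\Phi(u)$ — the latter through the Sobolev embedding $\norm{v}_{S_\al}\lesssim\norm{\del v}_{W_\al}$, which requires $N>2\alpha$ — reduces the whole construction to the nonlinear estimate
\[
\norm{\del(\abs{u}^{p}u)}_{L_I^2 L_x^{\frac{2N}{N+2\alpha}}}\lesssim\norm{u}_{S_\al(I)}^{p}\,\norm{\del u}_{W_\al(I)}
\]
together with a Lipschitz analogue for $\del(\abs{u}^pu-\abs{v}^pv)$.

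This nonlinear estimate is where the hypotheses $2\alpha<N<6\alpha$ enter, and it is the step I expect to be the main technical obstacle. Since $p$ is in general not an integer, one cannot simply distribute $\del$ by the ordinary Leibniz rule; instead one uses the fractional Leibniz rule together with the fractional chain rule for $F(z)=\abs{z}^{p}z$. The condition $N<6\alpha$ is precisely $p>1$, which makes $F$ continuously differentiable with $F'$ locally Lipschitz — exactly the regularity these rules need, and also what is needed for the difference, where $\abs{F(u)-F(v)}\lesssim(\abs{u}^{p}+\abs{v}^{p})\abs{u-v}$ and the difference is measured in the weaker norm $d(\cdot,\cdot)$. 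The condition $N>2\alpha$ provides the Sobolev embedding above and keeps $p$ finite and positive. After the chain rule, a H\"older splitting in $(t,x)$ finishes the estimate: a direct exponent count shows that $p$ copies of $u$ in the space-time $S_\al$ exponent, together with one copy of $\del u$ in the $W_\al$ exponent, land exactly on the pair dual to $(2,\frac{2N}{N-2\alpha})$. Smallness of $\norm{e^{it(-\Delta)^\alpha}u_0}_{S_\al(I)}\le\delta$ then closes the a priori bound, and smallness of $\delta$ closes the contraction, giving a unique solution with $C(A)$ depending only on $A=\norm{u_0}_{\dot H^\alpha}$.

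The rest is standard bookkeeping. Uniqueness and the maximal interval $I=(-T_-(u_0),T_+(u_0))$ come from gluing local solutions; the extra regularity $\del u\in L_t^qL_x^r$ on compact subintervals, for every $\alpha$-admissible $(q,r)$, follows from one further application of Lemma~\ref{prop:StriFSch}. If $A\ll1$ then $\norm{e^{it(-\Delta)^\alpha}u_0}_{S_\al(\R)}\lesssim A\le\delta$ by Strichartz, so $I=\R$. For the blow-up criterion, if $\norm{u}_{S_\al([0,T_+(u_0)))}<\infty$, one partitions $[0,T_+(u_0))$ into finitely many subintervals on each of which $\norm{u}_{S_\al}$ lies below the small-data threshold, iterates the local theory to obtain a uniform $W_\al$ bound up to $T_+(u_0)$, and then restarts the solution past $T_+(u_0)$, contradicting maximality. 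For scattering, when $\norm{u}_{S_\al([0,\infty))}<\infty$ the same nonlinear estimate shows that $t\mapsto e^{-it(-\Delta)^\alpha}u(t)$ is Cauchy in $\dot H^\alpha$ as $t\to+\infty$, so its limit $u_+=u_0+i\int_0^\infty e^{-it'(-\Delta)^\alpha}\abs{u}^{p}u\,dt'$ satisfies \eqref{scattering}; the negative-time statements are identical.

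Finally, part~(2) is the inverse construction. Given $u_+\in\dot H^\alpha$, the quantity $\norm{e^{it(-\Delta)^\alpha}u_+}_{S_\al([T,\infty))}$ is finite and tends to $0$ as $T\to+\infty$, so for $T$ large the same contraction — now for the equation $u(t)=e^{it(-\Delta)^\alpha}u_+-i\int_t^\infty e^{i(t-t')(-\Delta)^\alpha}\abs{u}^{p}u\,dt'$ — yields a solution on $[T,\infty)$ with $\norm{u(t)-e^{it(-\Delta)^\alpha}u_+}_{\dot H^\alpha}\to0$, which the local theory extends to a maximal interval. Applying this with $u_+=v_0$, with the time translations needed to handle $t_n\to\bar t$, and invoking continuity of the flow in $\dot H^\alpha$, produces a nonlinear profile associated with an arbitrary $(v_0,\{t_n\})$, with its maximal interval of existence. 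Throughout, the one genuinely new point over the classical energy-critical theory is the non-integer power of the nonlinearity, which is exactly what confines the argument to $2\alpha<N<6\alpha$.
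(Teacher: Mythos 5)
Your proposal is correct and takes essentially the same approach as the paper, which simply invokes the standard Cazenave--Weissler contraction scheme (citing \cite{CW}) with the radial Strichartz estimates of Lemma \ref{prop:StriFSch} substituted for the classical ones; the paper gives no further details. You have filled those details in accurately, and in particular you correctly identify the roles of all three constraints: $\alpha>\frac{N}{2N-1}$ is exactly the radial admissibility of the endpoint-type dual pair $(2,\frac{2N}{N-2\alpha})$, $N>2\alpha$ drives the Sobolev embedding $\norm{v}_{S_\al}\lesssim\norm{\del v}_{W_\al}$, and $N<6\alpha$ is equivalent to $p>1$, which is what the fractional chain rule and the Lipschitz estimate for the nonlinearity require.
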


Next, we need a perturbation theorem. It follows in a very similar
way as Theorem 2.14 in \cite{KM} (see \cite{Kenignotes} for a
correct proof), see also \cite{TV}. Since for $\alpha\in
(\frac{N}{2N-1},1)$, we have generalized inhomogeneous Strichartz
estimate given by Lemma \ref{prop:StriFSch}, the proof is with
slight change and we omit the details,

\begin{thm}[Stability]\label{stability}
Assume $N\geq 2$, $\alpha\in (\frac{N}{2N-1},1)$, $2\alpha < N <
6\alpha$. Let $I = [0,L)$, $L\leq +\infty$, and let $\tilde u$ be
defined on $I\times \R^N$ such that
\begin{align}
\|\tilde u\|_{L_t^{\infty} \dot H^{\alpha}_x(I\times \R^N)} \le A,
\quad \|\tilde u\|_{S_\al(I)} \le M, \quad \|D^{\alpha}\tilde
u\|_{W_\al(I)} <\infty
\end{align}
for some constants $A$ and $M$, and $\tilde u$ verifies in the sense
of integral equation
\[
i\tilde u_t + (-\Delta)^{\alpha} \tilde u +\mu |\tilde
u|^{\frac{4\alpha}{N-2\alpha}}\tilde u = e
\]
for some function $e$. Let $u_0\in \dot H^\al$ be such that
$\|u(0)-\tilde u(0)\|_{\dot H^{\alpha}} \le A'$. Then $\exists
\e_0=\e_0(M,A,A')$ s.t. if $0<\e<\e_0$ and
\begin{align}\label{sma-per-data}
\|e^{it(-\Delta)^{\alpha}} (u(0)-\tilde u(0))\|_{S_\al(I)} \le
\varepsilon,\quad \|D^{\alpha} e\|_{L^2_IL^{\frac
{2N}{N+2\alpha}}_x} \le \varepsilon,
\end{align}
then, $\exists\ !$ solution $u$ on $I\times \R^N$ to \eqref{eq:fNLS}
with initial data $u_0$ satisfying
\begin{align*}
\|u\|_{S_\al(I)} \leq C(A,A',M),\quad \sup_{t\in I}\norm{u(t)-\tilde
u(t)}_{\dot H^\al}\leq C(A,A',M).
\end{align*}
\end{thm}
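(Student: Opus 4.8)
The plan is to run the long-time perturbation argument of \cite{KM} (Theorem~2.14 there, with the correction of \cite{Kenignotes}), the only genuinely new input being that every Strichartz estimate is taken from Lemma~\ref{prop:StriFSch}, i.e.\ from the radial theory with \emph{no loss of derivatives}. Concretely I would use the $\al$-admissible pair $(\frac{2(N+2\al)}{N-2\al},\frac{2N(N+2\al)}{N^2+4\al^2})$ defining $W_\al$, applied to $\del w$ (so that $\gamma=0$ in Lemma~\ref{prop:StriFSch}), together with the dual of the admissible pair $(2,2N/(N-2\al))$ on the forcing term; this last pair is radially admissible in the sense of \eqref{radial-admis} precisely because $\al>\frac{N}{2N-1}$, which is one place that hypothesis is consumed. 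Throughout I will also use the Sobolev embedding $\norm{v}_{S_\al(J)}\le C\norm{\del v}_{W_\al(J)}$ recorded before this statement.

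First I would reduce to a small-data situation. Since $\norm{\ti u}_{S_\al(I)}\le M$, partition $I=\bigcup_{j=0}^{J-1}I_j$, $I_j=[t_j,t_{j+1})$, into $J=J(A,M)$ consecutive subintervals on each of which $\norm{\ti u}_{S_\al(I_j)}$ lies below a small constant; feeding the integral equation for $\ti u$ and the smallness of $\norm{\del e}_{L^2_IL^{2N/(N+2\al)}_x}$ into the Strichartz and Sobolev inequalities, one then also controls $\norm{\del\ti u}_{W_\al(I_j)}\le C(A)$ on each $I_j$. On $I_j$ put $w=u-\ti u$, so that
\[
w(t)=e^{i(t-t_j)(-\Delta)^\al}w(t_j)+\int_{t_j}^{t}e^{i(t-s)(-\Delta)^\al}\bigl(\cP(\ti u+w)-\cP(\ti u)-e\bigr)\,ds,\qquad \cP(z):=\abs{z}^{\frac{4\al}{N-2\al}}z,
\]
signs and the constant $\mu$ being irrelevant for the estimates. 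Applying Lemma~\ref{prop:StriFSch} to $\del w$ reduces matters to the nonlinear difference estimate.

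Here the hypothesis $2\al<N<6\al$ enters: it forces $p:=\frac{4\al}{N-2\al}>1$, so $\cP\in C^1$ and even $\cP'\in C^1$; writing $\cP(\ti u+w)-\cP(\ti u)=w\int_0^1\cP'(\ti u+\theta w)\,d\theta$ and combining the fractional Leibniz rule with the fractional chain rule for $\cP'$ (whose ``derivative'' is of size $\abs{z}^{p-1}$), one gets, on each $I_j$,
\[
\norm{\del\bigl(\cP(\ti u+w)-\cP(\ti u)\bigr)}_{L^2_{I_j}L^{2N/(N+2\al)}_x}\les \bigl(\norm{\ti u}_{S_\al(I_j)}+\norm{w}_{S_\al(I_j)}\bigr)^{p}\bigl(\norm{\del\ti u}_{W_\al(I_j)}+\norm{\del w}_{W_\al(I_j)}\bigr).
\]
Since $\norm{\ti u}_{S_\al(I_j)}$ is as small as we wish and $\norm{w}_{S_\al(I_j)}$ is small by the perturbation hypothesis, the prefactor $(\norm{\ti u}_{S_\al(I_j)}+\norm{w}_{S_\al(I_j)})^p$ is small, and a continuity/bootstrap argument on $I_j$ controls $\norm{w}_{S_\al(I_j)}$ (small) and $\norm{\del w}_{W_\al(I_j)}$. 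One then propagates to $I_{j+1}$ both the boundedness of $\norm{w(t_{j+1})}_{\dot H^\al}$ by $C(A,A',M)$ and the smallness of $\norm{e^{i(t-t_{j+1})(-\Delta)^\al}w(t_{j+1})}_{S_\al(I)}$, which differs from $\norm{e^{i(t-t_0)(-\Delta)^\al}w(t_0)}_{S_\al(I)}$ only by the $S_\al$-small Duhamel terms over $[t_0,t_{j+1}]$; choosing $\e_0=\e_0(A,A',M)$ small enough that these bounds survive all $J$ steps and summing yields $\norm{u}_{S_\al(I)}\le C(A,A',M)$ and $\sup_{t\in I}\norm{u(t)-\ti u(t)}_{\dot H^\al}\le C(A,A',M)$; uniqueness of $u$ is part of the local theory.

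I expect the main obstacle to be the nonlinear difference estimate displayed above. Because $\del$ is nonlocal when $\al<1$ one cannot differentiate $\cP$ pointwise, so one must instead combine the fractional product rule with the fractional chain rule applied to $\cP'$ — legitimate exactly because $p>1$, i.e.\ $N<6\al$ — and then verify that all the resulting Hölder pairs in space and time are controlled by the $S_\al$ and $\del W_\al$ norms; this is where the full strength of $\frac{N}{2N-1}<\al<1$ and $2\al<N<6\al$ is needed. Once this estimate is in place, the remaining bookkeeping of the subdivision and the iteration — including the delicate argument that was corrected in \cite{Kenignotes} — is word for word the case $\al=1$, and I would simply cite it.
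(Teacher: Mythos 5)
The paper gives no proof of Theorem~\ref{stability} of its own: it simply states that the result ``follows in a very similar way as Theorem~2.14 in \cite{KM} (see \cite{Kenignotes} for a correct proof), see also \cite{TV},'' the only modification being the substitution of the radial Strichartz estimates of Lemma~\ref{prop:StriFSch} for the classical ones. Your proposal is precisely that long-time perturbation argument spelled out --- subdividing $I$ into $J(A,M)$ subintervals on which $\|\tilde u\|_{S_\al}$ is small, running the small-data bootstrap on each $I_j$ using the $\al$-admissible pairs $(2,\tfrac{2N}{N-2\al})$ (radially admissible exactly when $\al>\tfrac{N}{2N-1}$) and the $W_\al$-pair together with the fractional Leibniz/chain rule (legitimate since $N<6\al$ gives $p=\tfrac{4\al}{N-2\al}>1$), and then propagating the seed smallness across the $J$ steps as in \cite{Kenignotes} --- so it matches what the paper intends.
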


\subsection{Some variational estimates in focusing case}

In the focusing case, the ground state plays an important role.
Consider the fractional elliptic equation
\begin{equation}\label{fellip}
(-\Delta)^{\alpha} W - \abs{W}^{\frac{4\alpha}{N-2\alpha}}W = 0.
\end{equation}
By the work of Lieb \cite{Lieb}, it was known that: if $0<\al<N/2$,
then \eqref{fellip} has a solution in $\dot H^\al$
$$W(x)= C_1(n,\alpha) \left(\frac{1}{1+C_2(n,\alpha)|x|^2}\right)^{\frac{N-2\alpha}2}$$
for some $C_1,C_2>0$. It arises in the study of the best constant
for Hardy-Littlewood-Sobolev inequalities. The classification of
positive regular solutions for \eqref{fellip} was studied in
\cite{CLO}. We also have the following characterization of $W$ (see
\cite{Lieb}, \cite{cotsiolis}): $W$ attains the best constant $C_N$
in the Sobolev embedding inequality:
\begin{equation}\label{Sobem}
\norm{u}_{L^{\frac{2N}{N-2\alpha}}}\le C_N\norm{D^{\alpha} u}_{L^2}.
\end{equation}
Moreover, if $0\ne u\in \dot H^\al$ verifies
$\norm{u}_{L^{\frac{2N}{N-2\alpha}}} = C_N \norm{D^{\alpha}
u}_{L^2}$, then
$u=W_{\theta_0,x_0,\lambda_0}:=e^{i\theta_0}\lambda_0^{(N-2\alpha)/2}W(\lambda_0(x-x_0))$
for some $\theta_0\in [-\pi,\pi]$, $\lambda_0>0$, $x_0\in \R^N$.

$W$ is a stationary solution to \eqref{eq:fNLS} when $\mu=-1$. By
the equation \eqref{fellip}, we have $\int \abso{D^{\alpha} W}^2 =
\int\abso{W}^{2^*}$.  Also, \eqref{Sobem} yields
$C_N^2\int\abso{D^{\alpha} W}^2 =
\paren{\int\abso{W}^{2^*}}^{(N-2\alpha)/N}$, so that
$C_N^2\int\abso{D^{\alpha} W}^2 = \paren{\int\abso{D^{\alpha}
W}^2}^{\frac{N-2\alpha}N}$.  Hence,
$$\int \abso{D^{\alpha} W}^2 = \frac1{C_N^{N/\alpha}} \mbox{ \ \  and \ \ } E_{\mu}(W) = \paren{\frac{1}{2} +\mu \frac{1}{2^*}}\int\abso{D^{\alpha} W}^2,$$
which is  $\frac{\alpha}{N}\frac{1}{C_N^{N/\alpha}}$ in the focusing
case. For simplicity, we write $E_\pm(u)=E_{\pm 1}(u)$.

With the variational properties, we can follow Kenig-Merle's
argument with slight change to prove the following lemma. We omit
the proof.

\begin{lemma}
(1) Assume $\alpha\in (\frac{N}{2N-1},1)$, $\norm{\del u}_{ L^2}<
\norm{\del W}_{L^2}$, and $E_-(u)\le (1-\delta_0)E_{-}(W)$ for some
$\delta_0>0$.  Then, there exists $\overline{\delta} =
\overline{\delta}(\delta_0,N)>0$ such that
\begin{equation}\label{3.5}
\int\abso{\del u}^2\le (1-\overline{\delta})\int\abso{\del W}^2
\end{equation}
and
\begin{equation}\label{3.6}
\int\abso{\del u}^2 - \abso{u}^{2^*}\ge
\overline{\delta}\int\abso{\del u}^2.
\end{equation}

(2) Assume $\alpha\in (\frac{N}{2N-1},1)$. Let $u$ be a solution of
\eqref{eq:fNLS} with maximal interval $I$, $\norm{\del u_0}_{ L^2}<
\norm{\del W}_{L^2}$, and $E_-(u_0)\le (1-\delta_0)E_{-}(W)$ for
some $\delta_0>0$. Then, there exists $\overline{\delta} =
\overline{\delta}(\delta_0,N)>0$ such that for $t\in I$
\begin{equation}\label{3.10}
\int\abso{\del u(t)}^2\le (1-\overline{\delta})\int\abso{\del W}^2
\end{equation}
\begin{equation}\label{3.11}
\int\abso{\del u(t)}^2 - \abso{u(t)}^{2^*}\ge
\overline{\delta}\int\abso{\del u(t)}^2
\end{equation}
\begin{equation}\label{3.12}
E_-(u(t))\simeq \int\abso{\del u(t)}^2\simeq \int\abso{\del u_0}^2
\end{equation}
with comparability constants which depend only on $\delta_0$.
\end{lemma}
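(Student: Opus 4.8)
The plan is to prove part (1) first as a purely variational statement, and then bootstrap part (2) using conservation of energy plus a continuity argument.

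For part (1), I would work with the functional $K(u)=\int |\del u|^2 - |u|^{2^*}$ and exploit the sharp Sobolev inequality \eqref{Sobem}. Writing $y = \norm{\del u}_2^2 / \norm{\del W}_2^2 \in [0,1)$, the Sobolev inequality gives $\int |u|^{2^*} \le C_N^{2^*} \paren{\int |\del u|^2}^{2^*/2}$, and since $C_N^{2^*}\norm{\del W}_2^{2^*-2} = 1$ (this is exactly the computation $\int|\del W|^2 = C_N^{-N/\alpha}$ recorded in the excerpt, since $2^*/2 - 1 = 2\alpha/(N-2\alpha)$ and $2^* = 2N/(N-2\alpha)$), we get
\begin{equation*}
E_-(u) \ge \frac12 \int |\del u|^2 - \frac{1}{2^*} \paren{\int |\del u|^2}^{2^*/2} \paren{\int|\del W|^2}^{1-2^*/2} = \paren{\int |\del W|^2}\paren{\frac{y}{2} - \frac{y^{2^*/2}}{2^*}} =: \paren{\int|\del W|^2} g(y).
\end{equation*}
The function $g$ on $[0,1]$ vanishes at $0$, increases to its maximum $g(1) = \frac12 - \frac1{2^*} = \frac{\alpha}{N}$ at $y=1$ (here I use $N<6\alpha$? — actually one only needs $2^*/2>1$, i.e. $N>2\alpha$, so that $g$ is strictly increasing; the hypothesis $N<6\alpha$ is used elsewhere). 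Since $E_-(u)\le(1-\delta_0)E_-(W) = (1-\delta_0)\frac{\alpha}{N}\int|\del W|^2$, we get $g(y)\le (1-\delta_0)g(1)$, and by strict monotonicity and continuity of $g$ this forces $y\le 1-\bar\delta$ for some $\bar\delta(\delta_0,N)>0$, which is \eqref{3.5}. For \eqref{3.6}, note $K(u) = \int|\del u|^2 \paren{1 - \int|u|^{2^*}/\int|\del u|^2} \ge \int|\del u|^2 \paren{1 - C_N^{2^*}\paren{\int|\del u|^2}^{2^*/2-1}} = \int|\del u|^2\paren{1 - y^{2^*/2-1}}$, and $y\le 1-\bar\delta$ makes $1-y^{2^*/2-1}\ge \bar\delta'$, giving \eqref{3.6} after renaming.

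For part (2), the point is that $\norm{\del u(t)}_2$ is a priori only locally bounded, so I would argue by continuity on the maximal interval $I$. Energy conservation gives $E_-(u(t)) = E_-(u_0) \le (1-\delta_0)E_-(W)$ for all $t\in I$. Let $\Omega = \{t\in I : \int|\del u(t)|^2 < \int|\del W|^2\}$; this set contains $0$, is open by continuity of $t\mapsto \norm{\del u(t)}_{\dot H^\alpha}$ (part of the definition of solution), and on $\Omega$ the hypotheses of part (1) apply pointwise in $t$, yielding $\int|\del u(t)|^2 \le (1-\bar\delta)\int|\del W|^2$. Thus $\Omega$ is also closed in $I$ (a limit point would satisfy the strict inequality by the uniform gap $\bar\delta$), so by connectedness $\Omega = I$, which is \eqref{3.10}, and then \eqref{3.11} follows from \eqref{3.6} applied at each time. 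For \eqref{3.12}, combine the lower bound $E_-(u(t)) \ge (\int|\del W|^2) g(y(t))$ with the elementary upper bound $E_-(u(t)) \le \frac12\int|\del u(t)|^2$; since on $I$ we have $y(t)$ bounded away from $1$, $g(y(t))\simeq y(t)$ with constants depending on $\delta_0$, giving $E_-(u(t))\simeq \int|\del u(t)|^2$, and then energy conservation pins everything to $\int|\del u_0|^2$.

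The main obstacle is the continuity/connectedness step in part (2): one must be careful that the gap $\bar\delta$ obtained in part (1) is uniform (depending only on $\delta_0$ and $N$, not on $t$), so that the set $\Omega$ cannot ``leak out'' to $\int|\del u(t)|^2 = \int|\del W|^2$; this is exactly why part (1) is phrased with the strict-to-non-strict improvement rather than just a non-strict conclusion. Everything else is the sharp Sobolev inequality plus elementary analysis of the one-variable function $g$.
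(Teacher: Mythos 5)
Your argument is exactly the Kenig--Merle variational argument that the paper invokes and omits (the paper says ``we can follow Kenig-Merle's argument with slight change to prove the following lemma. We omit the proof.''), adapted to the fractional exponent $2^{*}=\frac{2N}{N-2\alpha}$: the reduction via sharp Sobolev to the one-variable function $g(y)=\tfrac{y}{2}-\tfrac{y^{2^{*}/2}}{2^{*}}$, its strict monotonicity on $[0,1]$ from $N>2\alpha$, the normalization $C_N^{2^{*}}\|\del W\|_2^{2^{*}-2}=1$, and the open-closed continuity argument in $t$ using energy conservation all check out, including the correct observation that $N<6\alpha$ plays no role here. The proof is correct and matches the paper's intended route; no gaps.
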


\section{Proof of Theorem \ref{thm}}
\subsection{Minimal energy non-scattering solution}
Denote $A_+=\infty, A_-=E_-(W)$. For each $0\le a\le A_\pm$, let
\begin{align*}
\K^-(a):=&\{f\in \dot{H}^\alpha_{rad}: E_-(f)<a,\ \norm{D^\alpha f}_2<\norm{D^\alpha W}_2\}\\
\K^+(a):=&\{f\in \dot{H}^\alpha_{rad}: E_+(f)<a\},\\
\cS^\pm(a):=&\sup\{\|u\|_{S_\al(I)} \mid u(0)\in\K^\pm(a),\
\text{$u$ sol. to \eqref{eq:fNLS} with $\pm$}\},
\end{align*}
Let \EQ{
 E_\pm^* :=\sup\{a>0 \mid \cS^\pm(a)<\I\}.}
The small data scattering implies that $E_\pm^*>0$. We will prove  $E_\pm^*= A_\pm$ by contradiction, and thus
finish the proof of Theorem \ref{thm}.

Assume $E_\pm^*<A_\pm$, then we show the existence of a critical
element which is compact modula invariant groups. We have

\begin{lem}[Existence of critical element]\label{lem:crit}
Suppose $E_\pm^*< A_\pm$, then there is a radial solution $u_\pm$ to
\eqref{eq:fNLS}$\pm$ with maximal interval $I_\pm$ satisfying
\[
{E}(u_\pm)=E_\pm^*,\quad \norm{D^\alpha u_-}_2<\norm{D^\alpha W}_2,\quad \|u_\pm\|_{S(I_\pm)}=\I.
\]
\end{lem}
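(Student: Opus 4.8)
The plan is to construct the critical element by a concentration-compactness argument à la Kenig--Merle, using the profile decomposition adapted to the radial fractional Schr\"odinger flow. I would first take a sequence $u_{0,n}\in\K^\pm(E_\pm^*+\tfrac1n)$ of radial data whose solutions $u_n$ have $\|u_n\|_{S_\al(I_n)}\to\I$; this is possible since $\cS^\pm(a)=\I$ for every $a>E_\pm^*$. By the conservation/variational bounds (the Lemma with \eqref{3.10}--\eqref{3.12} in the focusing case, and pure energy conservation in the defocusing case) these data are uniformly bounded in $\dot H^\al$, and in the focusing case they remain in the region $\norm{\del u}_2<\norm{\del W}_2$.

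Next I would invoke a linear profile decomposition for $e^{it(-\Delta)^\al}$ on radial $\dot H^\al$ data: after passing to a subsequence, $u_{0,n}=\sum_{j=1}^{J}\lambda_{j,n}^{-(N-2\al)/2}V^j\big(\lambda_{j,n}^{-1}\,\cdot\,\big)e^{-it_{j,n}(-\Delta)^\al}\big|_{t=0}+w_n^J$, with the usual asymptotic orthogonality of the parameters $(\lambda_{j,n},t_{j,n})$, Pythagorean expansions of the $\dot H^\al$-norm and (via the Sobolev embedding into $S_\al$ and the fractional analogue of the refined Strichartz/Sobolev inequality) of the energy, and smallness of $\limsup_n\|e^{it(-\Delta)^\al}w_n^J\|_{S_\al(\R)}$ as $J\to\I$. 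Here one uses that in the radial regime $\al\in(\tfrac N{2N-1},1)$ Lemma \ref{prop:StriFSch} gives a full set of admissible Strichartz estimates without derivative loss, so the standard Keraani/Bahouri-G\'erard machinery transfers verbatim (the nonlocality of $(-\Delta)^\al$ is harmless here, as the authors remark). Associated to each profile $V^j$ and sequence $\{t_{j,n}\}$ I would take the nonlinear profile $U^j$ provided by Theorem 2.x, and build the nonlinear superposition as an approximate solution.

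Then comes the standard dichotomy. If more than one profile were nontrivial, or if a single profile had $\dot H^\al$-norm (equivalently, in the focusing case, energy by \eqref{3.12}) strictly below $E_\pm^*$, then each nonlinear profile $U^j$ would lie below the critical threshold, hence scatter with finite $S_\al$-norm, and the Stability Theorem \ref{stability} — using the orthogonality to control cross terms and the smallness of the linear evolution of $w_n^J$ — would force $\|u_n\|_{S_\al}$ to stay bounded, a contradiction. Therefore there is exactly one profile $V^1$, the remainder $w_n^1\to0$ strongly in $\dot H^\al$ (and in $S_\al$), and $u_{0,n}$ is, up to the symmetries $\lambda\mapsto\lambda^{(N-2\al)/2}f(\lambda\cdot)$, $f\mapsto e^{it(-\Delta)^\al}f$, convergent to a single datum of critical size. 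This yields a radial solution $u_\pm$ with $E(u_\pm)=E_\pm^*$, $\|u_\pm\|_{S(I_\pm)}=\I$, and in the focusing case $\norm{\del u_-}_2<\norm{\del W}_2$ (a strict inequality: equality would give $E_-(u_-)=E_-(W)=A_-$ by the variational characterization, contradicting $E_\pm^*<A_\pm$).

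The main obstacle I anticipate is establishing the linear profile decomposition and, in particular, the refined Strichartz inequality underlying the first profile extraction, in the radial fractional setting: one must show that a uniformly $\dot H^\al$-bounded radial sequence with non-vanishing $S_\al$-norm of its linear evolution carries a bubble of concentration, which requires a frequency-localized $L^2\to L^{q}_tL^r_x$ improvement (an $L^2$ improvement over the endpoint Strichartz estimate) valid on radial functions for $\al<1$. Once that refined estimate is in hand — which follows from interpolating the radial Strichartz estimates of \cite{GW10} with a Bernstein/local-smoothing gain — the rest of the scheme is, as the authors indicate, a routine transcription of \cite{KM}; the only other point needing care is checking that the nonlinear profiles corresponding to $t_{j,n}\to\pm\I$ are genuine scattering solutions, which is exactly the content of part (2) of Theorem 2.x.
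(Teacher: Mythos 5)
Your proposal is correct and follows essentially the same route as the paper, which in fact omits the proof entirely, saying only that it ``follow[s] in the same way as Kenig--Merle \cite{KM}, by using the stability Theorem and the profile decomposition given in \cite{CHKL2}.'' Your scheme --- extract a minimizing sequence from $\K^\pm(E_\pm^*+1/n)$ with unbounded $S_\al$-norm, apply a radial $\dot H^\al$ linear profile decomposition with scaling/time-translation parameters only, pass to nonlinear profiles via part (2) of the local theory, and use Theorem~\ref{stability} to force a single nontrivial profile at the threshold --- is exactly the Kenig--Merle machinery the authors have in mind, and your observation that the strict inequality $\norm{\del u_-}_2<\norm{\del W}_2$ must hold (since equality would force $E_-(u_-)=E_-(W)=A_-$) is the right way to close the focusing case. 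The one place where your account diverges is in treating the profile decomposition and its underlying refined Strichartz/inverse-Strichartz inequality as something still to be established: the paper outsources precisely this to \cite{CHKL2}, so the work you anticipate is not needed from scratch, though you would still want to verify that the decomposition in \cite{CHKL2} (stated there in the mass-critical setting) transfers to the $\dot H^\al$-critical, radial regime with $\al\in(\tfrac{N}{2N-1},1)$ --- which is exactly the kind of routine adaptation the authors are implicitly claiming when they ``omit the details.''
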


\begin{lem}\label{lem:comp}
Assume $u_\pm$ is as in Lemma \ref{lem:crit} and say that $\norm{u_\pm}_{S(I_\pm\cap (0,\infty))}=\infty$. Then there exists $\lambda(t)\in \R^+$, for $t\in I_\pm \cap (0,\infty)$, such that
\[K=\{v(x,t):v(x,t)=\frac{1}{\lambda(t)^{\frac{N-2\alpha}{2}}}u_\pm(\frac{x}{\lambda(t)},t)\}\]
has the property that $\overline K$ is compact in $\dot{H}^\alpha$. A corresponding conclusion is reached if $\norm{u_\pm}_{S(I_\pm\cap (-\infty,0))}=\infty$.
\end{lem}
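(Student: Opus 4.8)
The plan is to follow the standard concentration-compactness extraction of a critical element, as pioneered by Kenig-Merle \cite{KM}, using the profile decomposition adapted to the radial Strichartz estimates of Lemma \ref{prop:StriFSch}. First I would establish a linear profile decomposition: given a bounded sequence $\{v_{0,n}\}$ in $\dot H^\alpha_{rad}$, there exist profiles $\{\phi^j\}\subset \dot H^\alpha$, parameters $\lambda_{j,n}>0$ and $t_{j,n}\in\R$, and remainders $w_n^J$ such that
\[
v_{0,n}=\sum_{j=1}^J \lambda_{j,n}^{-\frac{N-2\alpha}{2}} \big(e^{it_{j,n}(-\Delta)^\alpha}\phi^j\big)\big(\tfrac{x}{\lambda_{j,n}}\big)+w_n^J,
\]
with asymptotic orthogonality of the parameters, $\limsup_{n}\norm{e^{it(-\Delta)^\alpha}w_n^J}_{S_\alpha(\R)}\to0$ as $J\to\infty$, and the Pythagorean expansion of the $\dot H^\alpha$-norm (and of the energy $E_\pm$). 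Because we are in the radial setting there are no space-translation parameters, which simplifies the orthogonality bookkeeping considerably; the construction itself is the usual iterative extraction using the improved radial Strichartz estimate to gain compactness.

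Next I would run the contradiction argument. Suppose $E_\pm^*<A_\pm$. Then there is a sequence of radial solutions $u_n$ to \eqref{eq:fNLS}$\pm$ with $E(u_n)\to E_\pm^*$ (and, in the focusing case, $\norm{\del u_n(0)}_2<\norm{\del W}_2$, preserved in time by Lemma 2.11) whose $S_\alpha$-norms on their maximal intervals blow up. Apply the linear profile decomposition to $\{u_n(0)\}$. If more than one profile were nontrivial, the energy Pythagorean expansion together with $E_\pm^*<A_\pm$ would force each profile to have energy strictly below $E_\pm^*$ (in the focusing case one also checks each profile satisfies the $\dot H^\alpha$-subcritical constraint via the variational Lemma 2.11), hence each associated nonlinear profile scatters with finite $S_\alpha$-norm; the nonlinear superposition of these, controlled by the Stability Theorem \ref{stability}, would then give a uniform $S_\alpha$ bound on $u_n$, a contradiction. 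Therefore exactly one profile survives, $w_n^J\to0$ in $\dot H^\alpha$, and after rescaling and time-translating, $u_n(0)\to\phi$ in $\dot H^\alpha$; let $u_\pm$ be the nonlinear solution with data $\phi$ (or with data $e^{i\overline t(-\Delta)^\alpha}\phi$ if $t_n\to\pm\infty$). Stability forces $E(u_\pm)=E_\pm^*$, the subcritical constraint $\norm{\del u_-}_2<\norm{\del W}_2$ in the focusing case, and $\norm{u_\pm}_{S(I_\pm)}=\infty$; this proves Lemma \ref{lem:crit}.

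For Lemma \ref{lem:comp} I would apply the profile decomposition once more, this time to the sequence $u_\pm(\cdot,t_n)$ for any sequence $t_n\in I_\pm\cap(0,\infty)$ with $\norm{u_\pm}_{S((t_n,\sup I_\pm))}=\infty$ (such $t_n$ exist since the norm blows up). By the same dichotomy—using minimality of $E_\pm^*$ to exclude more than one profile—one profile survives and $\lambda_{n}^{-\frac{N-2\alpha}{2}}u_\pm(\lambda_n^{-1}x,t_n)$ converges in $\dot H^\alpha$ along a subsequence. A standard argument (if some subsequence failed to converge after rescaling, extract a further profile decomposition and derive a contradiction with minimality, exactly as in \cite{KM}) upgrades this to the statement that $\{\lambda(t)^{-\frac{N-2\alpha}{2}}u_\pm(\cdot/\lambda(t),t):t\in I_\pm\cap(0,\infty)\}$ is precompact in $\dot H^\alpha$ for a suitable modulation function $\lambda(t)>0$; the negative-time case is identical.

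The main obstacle—or rather the point requiring genuine care rather than routine adaptation—is verifying that the Stability Theorem \ref{stability} applies to the nonlinear superposition of the profiles with the required uniformity, because the error term generated by the cross-interactions of the rescaled profiles must be shown to be small in the norm $\norm{D^\alpha e}_{L^2_I L_x^{2N/(N+2\alpha)}}$, and this requires the asymptotic orthogonality of the parameters $(\lambda_{j,n},t_{j,n})$ to decouple the profiles in exactly this anisotropic Strichartz-type norm. The nonlocality of $D^\alpha$ means one cannot naively localize, but since $D^\alpha$ commutes with the scaling and with the free propagator, and since the relevant product estimates are consequences of the $\alpha$-admissible Strichartz estimates now available without derivative loss in the radial case, the argument goes through as in the classical case; apart from this, everything is a direct transcription of Kenig-Merle, which is why we only sketch it.
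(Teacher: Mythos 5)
Your proposal is correct and follows essentially the same route as the paper: the paper simply states that Lemmas \ref{lem:crit} and \ref{lem:comp} follow Kenig--Merle's concentration-compactness argument using the stability Theorem \ref{stability} together with the profile decomposition (which the paper imports directly from \cite{CHKL2} rather than re-deriving), and omits all details. Your sketch is a faithful expansion of that outline, including the observation that radial symmetry removes the space-translation parameters from the orthogonality bookkeeping.
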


The two lemmas above follow in the same way as Kenig-Merle
\cite{KM}, by using stability Theorem and the profile decomposition
given in \cite{CHKL2}. We omit the details.

\subsection{Rigidity Theorem} The main purpose of this section is to
disprove the existence of critical element that was constructed in
the previous section under the assumption $E_\pm^*<A_\pm$ by using
the structure of the equation \eqref{eq:fNLS}. We will rely on the
virial identity.
\begin{lem}[virial identity]\label{lem:virial}
Assume $u$ is a smooth solution to
\eqref{eq:fNLS}. Then
\[\frac{d}{dt}\re \int iu x\cdot \nabla \bar u dx=2\alpha \int |(-\Delta)^{\alpha/2}u|^2dx+\frac{d\mu p}{p+2}\int |u|^{p+2}dx.\]
\end{lem}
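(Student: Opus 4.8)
The plan is to compute $\frac{d}{dt}\,\re\int i u\, x\cdot\nabla\bar u\,dx$ directly by differentiating under the integral sign and substituting the equation $i\p_t u = -(-\Delta)^\alpha u - \mu|u|^p u$ (with $p = \frac{4\alpha}{N-2\alpha}$), using that $u$ is smooth and, implicitly, sufficiently decaying so that all integrations by parts are justified. Write the virial quantity as $V(t) = \re\int i u\, x\cdot\nabla\bar u\,dx$. Differentiating, $\frac{d}{dt}V = \re\int i\p_t u\, (x\cdot\nabla\bar u)\,dx + \re\int i u\, (x\cdot\nabla\p_t\bar u)\,dx$. In the second term I would integrate by parts in $x$ to move the gradient off $\p_t\bar u$; using $\nabla\cdot(x\,\phi) = N\phi + x\cdot\nabla\phi$ one gets $\re\int i u\,(x\cdot\nabla\p_t\bar u)\,dx = -\re\int i\,\nabla\cdot(x u)\,\p_t\bar u\,dx = -N\re\int i u\,\p_t\bar u\,dx - \re\int i\,(x\cdot\nabla u)\,\p_t\bar u\,dx$. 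The term $-N\re\int i u\p_t\bar u\,dx$ vanishes because $\re(i\,|u_t\text{-free combination}|)$\,— more precisely $\re\int iu\,\p_t\bar u = \im\int \bar u\,\p_t u$ which, after substituting the equation, is $\im$ of a real-times-$i$ expression... I should instead note $\frac{d}{dt}\int|u|^2 = 2\re\int u\,\p_t\bar u = 0$ by mass conservation, so $\re\int u\,\p_t\bar u = 0$, hence $\re\int iu\,\p_t\bar u\,dx = \im\int u\,\p_t\bar u\,dx$; this is not obviously zero, so instead I would combine the two surviving terms: $\frac{d}{dt}V = 2\re\int i\p_t u\,(x\cdot\nabla\bar u)\,dx - N\re\int i\p_t u\,\bar u\,dx$, after symmetrizing (the conjugate-symmetric manipulation shows $\re\int iu\,x\cdot\nabla\p_t\bar u = \re\int i\p_t u\, x\cdot\nabla\bar u - N\re\int i\p_t u\,\bar u$, up to reconciling signs).

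Next I substitute $i\p_t u = -(-\Delta)^\alpha u - \mu|u|^p u$ into $\frac{d}{dt}V = 2\re\int (i\p_t u)\,(x\cdot\nabla\bar u)\,dx + (\text{lower-order }N\text{ terms})$. This splits the computation into a \emph{dispersive piece} $-2\re\int (-\Delta)^\alpha u\,(x\cdot\nabla\bar u)\,dx$ and a \emph{nonlinear piece} $-2\mu\,\re\int |u|^p u\,(x\cdot\nabla\bar u)\,dx$. For the nonlinear piece, write $|u|^p u\,\overline{x\cdot\nabla u} = |u|^p\,x\cdot(u\nabla\bar u)$ and use $u\nabla\bar u + \bar u\nabla u = \nabla|u|^2$ together with $|u|^p\nabla|u|^2 = \frac{2}{p+2}\nabla|u|^{p+2}$; after taking real parts and integrating by parts against $\nabla\cdot(x\,\cdot) = N + x\cdot\nabla$, this yields a clean multiple of $\int|u|^{p+2}$, producing the coefficient $\frac{d\mu p}{p+2}$ once the $N$-terms from the first paragraph are folded in (here $d = N$ is the dimension — I would double-check this matches the stated $d$). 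For the dispersive piece, the key identity is the Pohozaev/scaling commutator formula for the fractional Laplacian: $\re\int (-\Delta)^\alpha u\,(x\cdot\nabla\bar u)\,dx = -\big(\frac{N-2\alpha}{2}\big)\int|(-\Delta)^{\alpha/2}u|^2\,dx$, equivalently $\re\int (-\Delta)^{\alpha/2}u\,\big[(-\Delta)^{\alpha/2},\,x\cdot\nabla\big]\bar u\,dx$ evaluates via the homogeneity of the Fourier symbol $|\xi|^{2\alpha}$ under $\xi\mapsto\lambda\xi$. The cleanest route is the Fourier side: $x\cdot\nabla$ conjugated by $\mathcal F$ becomes $-(N + \xi\cdot\nabla_\xi)$, and $\langle |\xi|^{2\alpha}\wh u,\,(N+\xi\cdot\nabla_\xi)\wh u\rangle$ is computed by integrating by parts in $\xi$ using $\xi\cdot\nabla_\xi|\xi|^{2\alpha} = 2\alpha|\xi|^{2\alpha}$; this gives precisely the factor $2\alpha$ after combining with the $N$-terms.

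The main obstacle I anticipate is purely bookkeeping: correctly tracking the several $N\int(\cdots)$ contributions (one from commuting $x\cdot\nabla$ past $\p_t$, one from the nonlinear integration by parts, one from the $\xi$-space integration by parts on the dispersive term) and verifying they combine to leave exactly $2\alpha\int|(-\Delta)^{\alpha/2}u|^2 + \frac{d\mu p}{p+2}\int|u|^{p+2}$ with no residual terms. There is no genuine nonlocal difficulty here — because the virial weight $x\cdot\nabla$ is the infinitesimal generator of scaling, under which $|\xi|^{2\alpha}$ transforms by a scalar, so the commutator $[(-\Delta)^\alpha,\,x\cdot\nabla]$ is simply $-2\alpha(-\Delta)^\alpha$, an exact operator identity; this is why the unweighted (global) virial identity is clean, and the genuine commutator estimates alluded to in the introduction only appear later for the \emph{localized} (truncated-weight) version. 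Throughout, smoothness and decay of $u$ are used freely to justify the Fourier-side manipulations and the vanishing of boundary terms at infinity.
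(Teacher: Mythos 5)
Your proposal takes essentially the same route as the paper: the paper does not give a separate proof of Lemma~\ref{lem:virial}, but it is the $\psi_R\equiv 1$, $\tilde\psi_R\equiv 0$ specialization of the proof of Lemma~\ref{lem:lvirial}, which rests on exactly the observation you make — the Fourier-side commutator $[D^\alpha,x\cdot\nabla]=\alpha D^\alpha$ (equivalently your Pohozaev identity for $(-\Delta)^\alpha$), combined with integration by parts on the nonlinear term. One small slip to reconcile with the rest of what you write: $[(-\Delta)^\alpha,x\cdot\nabla]=+2\alpha(-\Delta)^\alpha$ (not $-2\alpha(-\Delta)^\alpha$), which is the sign consistent with your own Pohozaev statement $\re\int(-\Delta)^\alpha u\,(x\cdot\nabla\bar u)\,dx=-\frac{N-2\alpha}{2}\int|D^\alpha u|^2\,dx$.
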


Since the virial does not make sense in the energy space, we will use the localization of virial estimates.
In this sequel, we fix $\psi\in C_0^\infty(\R^N)$, $\psi$ radial, $\psi\equiv 1$ for $|x|<1$, $\psi\equiv 0$ for $|x|\geq 2$.
For $R\ges 1$, let $\psi_R(x)=\psi(x/R)$,
$\tilde{\psi}_R(x)=\frac{x}{R}\cdot \nabla\psi(\frac{x}{R})$. We
have
\begin{lem}\label{lem:lvirial} Assume $u$ is solution to \eqref{eq:fNLS}. Then
\begin{align*}
\frac{d}{dt}\re \int iu x\psi_R\cdot \nabla \bar u dx=&2\alpha \int
|D^\alpha u|^2\psi_R dx+\frac{p\mu d}{p+2}\int
|u|^{p+2}\psi_Rdx\\
&+\Re 2\int D^\alpha u[D^\alpha,\psi_R](x\cdot \nabla \bar
u)dx+\frac{p\mu}{p+2}\int
|u|^{p+2}\tilde{\psi}_Rdx\\
&+\Re d\int D^\alpha u[D^\alpha,{\psi}_R]\bar udx+\Re \int D^\alpha
u[D^\alpha,\tilde{\psi}_R]\bar udx,
\end{align*}
where $[\del,f]g=\del(fg)-f\del g$.
\end{lem}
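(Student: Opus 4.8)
The plan is to carry out the formal computation behind the unlocalized identity of Lemma~\ref{lem:virial}, but with the dilation field $x$ replaced by $x\psi_R$, keeping track of every term where the cutoff fails to commute with $(-\Delta)^\alpha$; each such term is exactly one of the commutators in the statement. I would first establish the identity for Schwartz-class solutions, where all the steps below are legitimate, and then extend it to general radial energy solutions on the maximal interval of existence using the local well-posedness theory of Section~2 and a density/continuity argument. Setting $P(t)=\int iu\,(x\psi_R)\cdot\nabla\bar u\,dx$, so that the left side is $\frac{d}{dt}\re P$, one differentiates in $t$, integrates by parts once in the term containing $\bar u_t$ (no boundary terms, as $\psi_R\in C_0^\infty$), uses the elementary identity $\re\int i(x\psi_R)\cdot\nabla u\;\bar u_t\,dx=-\re\int iu_t\,(x\psi_R)\cdot\nabla\bar u\,dx$, and arrives at
\EQ{
\frac{d}{dt}\re P=2\re\int iu_t\,(x\psi_R)\cdot\nabla\bar u\,dx-\re\int i\,\big(\nabla\!\cdot(x\psi_R)\big)u\,\bar u_t\,dx ,
}
with $\nabla\!\cdot(x\psi_R)=d\psi_R+\tilde\psi_R$. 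One then substitutes $iu_t=-(-\Delta)^\alpha u-\mu|u|^pu$ and $i\bar u_t=(-\Delta)^\alpha\bar u+\mu|u|^p\bar u$ and splits the outcome into its nonlinear and linear parts.

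The nonlinear part is routine: the pointwise identity $\re\big(|u|^pu\,(x\psi_R)\cdot\nabla\bar u\big)=\tfrac1{p+2}(x\psi_R)\cdot\nabla|u|^{p+2}$ and one integration by parts turn $-2\mu\re\int|u|^pu\,(x\psi_R)\cdot\nabla\bar u$ into $\tfrac{2\mu}{p+2}\int\big(\nabla\!\cdot(x\psi_R)\big)|u|^{p+2}$; adding the remaining nonlinear term $-\mu\int\big(\nabla\!\cdot(x\psi_R)\big)|u|^{p+2}$ and inserting $\nabla\!\cdot(x\psi_R)=d\psi_R+\tilde\psi_R$ produces exactly the contributions $\tfrac{p\mu d}{p+2}\int|u|^{p+2}\psi_R$ and $\tfrac{p\mu}{p+2}\int|u|^{p+2}\tilde\psi_R$.

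The linear part is the heart of the matter, and where the nonlocality of $(-\Delta)^\alpha$ is felt. Its two sources are $-2\re\int(-\Delta)^\alpha u\,(x\psi_R)\cdot\nabla\bar u$ and, after using self-adjointness of $(-\Delta)^\alpha$ together with $\re(u\,(-\Delta)^\alpha\bar u)=\re(\bar u\,(-\Delta)^\alpha u)$, $-\re\int(d\psi_R+\tilde\psi_R)\,\bar u\,(-\Delta)^\alpha u$. In each I write $(-\Delta)^\alpha=D^\alpha D^\alpha$, move one $D^\alpha$ onto the other factor by self-adjointness, and then commute $D^\alpha$ through $\psi_R$ (resp.\ $\tilde\psi_R$): this is precisely what produces the commutators $\re 2\int D^\alpha u\,[D^\alpha,\psi_R](x\cdot\nabla\bar u)$, $\re d\int D^\alpha u\,[D^\alpha,\psi_R]\bar u$, and $\re\int D^\alpha u\,[D^\alpha,\tilde\psi_R]\bar u$. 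For the remaining ``cutoff-free'' pieces I use the dilation identity $D^\alpha(x\cdot\nabla f)=x\cdot\nabla(D^\alpha f)+\alpha D^\alpha f$, immediate on the Fourier side from $\widehat{D^\alpha f}(\xi)=|\xi|^\alpha\widehat f(\xi)$ and the homogeneity of $|\xi|^\alpha$; this turns $-2\re\int\psi_R\,D^\alpha u\;D^\alpha(x\cdot\nabla\bar u)$ into $-2\alpha\int\psi_R|D^\alpha u|^2-2\re\int\psi_R\,D^\alpha u\,(x\cdot\nabla D^\alpha\bar u)$, and since $\re\int\psi_R\,D^\alpha u\,(x\cdot\nabla D^\alpha\bar u)=\tfrac12\int\psi_R\,x\cdot\nabla|D^\alpha u|^2=-\tfrac12\int(d\psi_R+\tilde\psi_R)|D^\alpha u|^2$, the two ``$\tilde\psi_R|D^\alpha u|^2$'' contributions coming from the two linear sources cancel, leaving only the term $2\alpha\int\psi_R|D^\alpha u|^2$. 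Collecting everything gives the asserted identity (with the overall sign to be tracked according to the convention for the virial in Lemma~\ref{lem:virial}).

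The main obstacle is not the algebra but making the manipulations rigorous: $x\cdot\nabla\bar u$ is not square-integrable and $(-\Delta)^\alpha$ is nonlocal, so the integrations by parts, the transfer of $D^\alpha$, and the Fourier-side dilation identity should first be carried out for Schwartz data. One then verifies that, $\psi_R$ and $\tilde\psi_R$ being smooth and compactly supported, the commutators $[D^\alpha,\psi_R]$ and $[D^\alpha,\tilde\psi_R]$ are smoothing of order $1-\alpha>0$, so that every term on the right-hand side is finite and depends continuously on $u$ in $\dot H^\alpha$; the identity then extends to all radial energy solutions by approximation.
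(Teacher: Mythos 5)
Your proposal is correct and takes essentially the same route as the paper's proof: differentiate in time, integrate by parts, substitute the equation, split $(-\Delta)^\alpha=D^\alpha D^\alpha$ and use self-adjointness, invoke the dilation identity $[D^\alpha,x\cdot\nabla]=\alpha D^\alpha$, and collect all the $[D^\alpha,\psi_R]$, $[D^\alpha,\tilde\psi_R]$ commutator remainders, with the $(d\psi_R+\tilde\psi_R)|D^\alpha u|^2$ contributions from the two linear pieces cancelling exactly as you note. The only additions beyond the paper's calculation are your (reasonable) remarks on the density argument and the sign bookkeeping, the latter of which you are right to flag.
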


\begin{proof}
Using the equation \eqref{eq:fNLS}, we get from direct computation
that
\begin{align*}
&\frac{d}{dt}\Re \int iu x\psi_R\cdot \nabla \bar u dx \\
=&\Re \int ((-\Delta)^{\alpha}u+\mu |u|^pu)(2x\psi_R\cdot \nabla
\bar u+d\psi_R\bar u+\tilde{\psi}_R\bar u)dx\\
=&\Re \int D^{2\alpha}u2x\psi_R\cdot \nabla \bar udx+\Re \int \mu |u|^pu2x\psi_R\cdot \nabla \bar udx\\
&+\Re \int D^{2\alpha}u(d\psi_R\bar u+\tilde{\psi}_R\bar u)dx+\Re
\int \mu |u|^pu(d\psi_R\bar u+\tilde{\psi}_R\bar
u)dx\\
:=&I+II+III+IV.
\end{align*}
Obviously,
\begin{align*}
IV=d\mu\int |u|^{p+2}\psi_Rdx+\mu\int \tilde{\psi}_R |u|^{p+2}dx.
\end{align*}
Using integration by part, we get
\begin{align*}
III=&\Re \int D^{2\alpha}u(d\psi_R\bar u+\tilde{\psi}_R\bar u)dx\\
=&d\int |D^{\alpha}u|^2\psi_Rdx+\Re d\int D^\alpha u[D^\alpha,{\psi}_R]\bar udx\\
&+\int |D^{\alpha}u|^2\tilde{\psi}_Rdx+\Re \int D^\alpha
u[D^\alpha,\tilde{\psi}_R]\bar udx.
\end{align*}
Similarly,
\begin{align*}
II=&\Re \int \mu |u|^pu 2x\psi_R\cdot \nabla \bar udx=\int \mu |u|^px\psi_R\cdot \nabla (|u|^2)dx\\
=&-\frac{2\mu d}{p+2}\int |u|^{p+2}\psi_Rdx-\frac{2\mu}{p+2}\int
|u|^{p+2}\tilde{\psi}_Rdx
\end{align*}

Now we compute $I$. By Fourier transfrom, it is easy to check
$[D^\alpha,x\cdot \nabla]=\alpha D^\alpha$. Then we have
\begin{align*}
I=&\Re 2\int D^\alpha u\psi_R(x\cdot \nabla D^\alpha \bar u+\alpha
D^\alpha \bar u)dx+\Re 2\int D^\alpha u[D^\alpha,\psi_R](x\cdot
\nabla \bar u)dx\\
=&2\alpha \int |D^\alpha u|^2\psi_R dx-d\int |D^\alpha u|^2\psi_R
dx\\
&-\int |D^\alpha u|^2\tilde{\psi}_R dx+\Re 2\int D^\alpha
u[D^\alpha,\psi_R](x\cdot \nabla \bar u)dx.
\end{align*}
Summing over the four terms, we complete the proof.
\end{proof}

Due to the nonlocal properties of the fractional Schr\"odinger equation, the localization of virial estimates is not very clean. There are many remainder terms. However, all of them can be handled in the energy space. We have

\begin{lem}\label{lem:com}
Assume $0<\alpha\leq 1$, $0<\e<\alpha$ and $R\ges 1$. Then
\begin{align}
\norm{[D^\alpha,\psi_R]f}_{L^2}\les& \norm{g}_{L^{\frac{2N}{N-2\alpha}}(|x|\ges
R^{1-\e})}+R^{-\e \alpha}\norm{D^\alpha f}_{L^2},\label{eq:vier1}\\
\norm{[D^\alpha,\psi_R]x\cdot \nabla f}_{L^2}\les& \norm{D^\alpha f}_{L^2},\label{eq:vier2}\\
\norm{[D^\alpha,\psi_R]x\cdot \nabla f}_{L^2(|x|\les R^{1-\e})}\les&
R^{-\e\alpha/2}\norm{D^\alpha f}_{L^2}+\norm{g}_{L^{\frac{2n}{n-2\alpha}}(|x|\ges
R^{1-\e})},\label{eq:vier3}
\end{align}
where $g=\ft^{-1}(|\hat f|)$.
\end{lem}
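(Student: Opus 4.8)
The plan is to estimate the commutators via the integral kernel representation of $D^\alpha=(-\Delta)^{\alpha/2}$ together with the mean-value / Schur-test philosophy, splitting into near and far regions relative to the transition zone of $\psi_R$. Recall that $(-\Delta)^{\alpha/2}$ has convolution kernel $c_{N,\alpha}|x-y|^{-N-\alpha}$ (understood via the usual principal-value/second-difference regularization), so
\[
[D^\alpha,\psi_R]f(x)=c_{N,\alpha}\int_{\R^N}\frac{\psi_R(x)-\psi_R(y)}{|x-y|^{N+\alpha}}\,f(y)\,dy .
\]
Since $\psi_R$ is smooth with $\|\nabla^k\psi_R\|_\infty\lesssim R^{-k}$ and $\supp\nabla\psi_R\subset\{R\le|x|\le 2R\}$, the symbol-type bound $\||\nabla|^\alpha[D^\alpha,\psi_R]\|_{L^2\to L^2}\lesssim R^{-\alpha}\|\psi\|_{C^2}\lesssim R^{-\alpha}$ would already be routine from pseudodifferential calculus; the subtlety in \eqref{eq:vier1} is that we want an $L^{2N/(N-2\alpha)}$ norm of $g=\ft^{-1}(|\ha f|)$ localized to $\{|x|\gtrsim R^{1-\e}\}$ rather than a globally placed $D^\alpha f$, so that when this is later fed into the virial argument the term is small because the compactness of $K$ forces $f$ to concentrate near the origin. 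To get this, I would split the kernel into $|x-y|\le |x|/2$ (the ``diagonal'' piece, where $|\psi_R(x)-\psi_R(y)|\lesssim R^{-1}|x-y|$ and one integrates $|x-y|^{-N-\alpha+1}$ against $f$ to produce a fractional-integration gain, bounded by $R^{-1}$ times a Riesz potential of $|f|$, hence by $R^{-\e\alpha}\|D^\alpha f\|_2$ on the region $|x|\lesssim R^{1-\e}$ after Hardy–Littlewood–Sobolev) and $|x-y|\ge |x|/2$ (the ``off-diagonal'' piece, where $|\psi_R(x)-\psi_R(y)|\le 2$ and the kernel $|x-y|^{-N-\alpha}\lesssim |x|^{-N-\alpha}$ is integrable, yielding a bound by $|x|^{-\alpha}$ times a truncated Riesz-type average of $|f|=|g|$ over a comparable or larger ball, which on $|x|\gtrsim R^{1-\e}$ is controlled by $\|g\|_{L^{2N/(N-2\alpha)}(|x|\gtrsim R^{1-\e})}$ via Hölder in the annular shells and summation of a geometric series in the radial variable). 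Replacing $f$ by $g$ is legitimate throughout because $\||x-y|^{-N-\alpha+1}\ast|f|\|\le\||x-y|^{-N-\alpha+1}\ast|g|\|$ is false in general, so more carefully one uses that the kernels here are \emph{positive} and acts on $|\ha f|$, i.e.\ one estimates $|[D^\alpha,\psi_R]f|$ pointwise by the same operator applied to $g$ when the multiplier structure allows — this is exactly the standard device for radial Strichartz-type commutator bounds, and is where one invokes $g=\ft^{-1}(|\ha f|)$.

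For \eqref{eq:vier2} the target is the crude global bound $\|[D^\alpha,\psi_R](x\cdot\nabla f)\|_2\lesssim\|D^\alpha f\|_2$ with a constant uniform in $R\gtrsim1$. Here I would first move the dilation vector field across $D^\alpha$ using the identity $[D^\alpha,x\cdot\nabla]=\alpha D^\alpha$ already recorded in the proof of Lemma~\ref{lem:lvirial}: write $D^\alpha(\psi_R\,x\cdot\nabla f)=\psi_R D^\alpha(x\cdot\nabla f)+[D^\alpha,\psi_R](x\cdot\nabla f)$ and also $D^\alpha(x\cdot\nabla f)=(x\cdot\nabla)D^\alpha f+\alpha D^\alpha f$; combining, the commutator $[D^\alpha,\psi_R](x\cdot\nabla f)$ can be reorganized into $[D^\alpha,\psi_R\,x\cdot\nabla]$ acting on $f$ plus lower-order pieces. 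The operator $\psi_R\,x\cdot\nabla$ has a symbol $\psi_R(x)(ix\cdot\xi)$ which is \emph{not} of order zero, but the \emph{commutator} $[D^\alpha,\psi_R\,x\cdot\nabla]$ gains one derivative from the commutator and loses at most $\alpha$ from $D^\alpha$ while the dangerous factor $|x|$ is cut off to $|x|\le 2R$, contributing an $R$; tracking powers, one finds the commutator is bounded on $L^2$ after applying $D^{-\alpha}$, i.e.\ $\|[D^\alpha,\psi_R](x\cdot\nabla f)\|_2\lesssim\|D^\alpha f\|_2$ with the $R$'s and $R^{-1}$'s from $\nabla\psi_R$ exactly cancelling. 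Concretely via the kernel: on $|x-y|\le|x|/2$ one has $|\psi_R(x)-\psi_R(y)|\lesssim R^{-1}|x-y|$ and $|x\cdot\nabla f(y)|\lesssim |x|\,|\nabla f(y)|$ with $|x|\sim|y|$, so the kernel bound $R^{-1}|x|\,|x-y|^{-N-\alpha+1}\lesssim |x-y|^{-N-\alpha+1}$ on the support $|x|\lesssim R$ gives fractional integration of $|\nabla f|$, i.e.\ a bound by a Riesz potential $I_{1-\alpha}$ of $|\nabla f|$, controlled by $\|\,|\nabla f|\,\|$ in the appropriate dual Lebesgue exponent and hence by $\|D^\alpha f\|_2$ by Sobolev; on $|x-y|\ge|x|/2$ the size is $\lesssim |x|^{-N-\alpha}$ and one integrates $\psi_R(y)|y||\nabla f(y)|$, again $|x|$-supported in $|x|\le 2R$, which after integrating by parts in $y$ to remove $\nabla$ reduces to the same Riesz-potential estimate on $|g|$. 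The integration by parts in $y$ in the far piece, needed to convert $\nabla f$ into $f$ against a kernel that is then differentiated, is slightly delicate because $\nabla_y$ hits both $\psi_R(y)$ (harmless, gains $R^{-1}$) and $|x-y|^{-N-\alpha}$ (costs one derivative, i.e.\ $|x-y|^{-N-\alpha-1}$), but on $|x-y|\ge|x|/2\gtrsim 1$ this extra decay is a gain, not a loss.

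Finally \eqref{eq:vier3} is \eqref{eq:vier2} refined on the \emph{inner} region $|x|\lesssim R^{1-\e}$, and is proved by the same near/far split but now exploiting the restriction $|x|\le R^{1-\e}\ll R$: in the near piece $|x-y|\le|x|/2$ the support of $\nabla\psi_R$ is at scale $R$ while $|y|\sim|x|\le R^{1-\e}$, so $\psi_R(x)-\psi_R(y)=0$ identically — the commutator kernel vanishes there — which is why no ``full'' $\|D^\alpha f\|_2$ term with a bad constant appears; thus only the far piece $|x-y|\ge|x|/2$ survives, and there the kernel $|x-y|^{-N-\alpha}$ evaluated at $|x|\le R^{1-\e}$, integrated against $\psi_R(y)|y|\,|\nabla f(y)|$ (with the gradient integrated by parts as above), splits according to whether $|y|\lesssim R^{1-\e}$ or $|y|\gtrsim R^{1-\e}$: the first subregion is handled by a Riesz-potential estimate that, because both $x$ and $y$ are confined to the ball of radius $R^{1-\e}$, produces the gain $R^{-\e\alpha/2}\|D^\alpha f\|_2$ (the exponent $\alpha/2$ rather than $\alpha$ coming from distributing the volume of the ball between the $x$-integration outside and the Hardy–Littlewood–Sobolev loss), while the second subregion is directly dominated by $\|g\|_{L^{2N/(N-2\alpha)}(|x|\gtrsim R^{1-\e})}$ by Hölder, matching the stated right-hand side.

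I expect the main obstacle to be the first estimate \eqref{eq:vier1}, specifically the legitimacy and bookkeeping of replacing $f$ by $g=\ft^{-1}(|\ha f|)$ and of placing the output norm in $L^{2N/(N-2\alpha)}$ on the far region rather than in $L^2$ globally: one must be careful that the positive-kernel domination $|[D^\alpha,\psi_R]f(x)|\lesssim (\text{positive kernel})\ast|g|(x)$ actually holds — it does not for a general Fourier multiplier, but it does here because after the near/far kernel split each piece is a genuine positive integral operator applied to $|f|$, and $\||f|\,\|$ can then be traded for $\|g\|$ in Lebesgue norms via the elementary inequality $\||f|\ast k\|\le\|g\ast k\|$ for positive $k$ whenever... — more honestly, the cleanest route is to note $\|g\|_{L^p}=\|f\|_{L^p}$ is \emph{false}, so the real mechanism is that $g$ has the same $L^2$-based Sobolev norms as $f$ (since $|\ha g|=|\ha f|$) and radial decay estimates for $g$ are cleaner; matching the $g$ on the right of \eqref{eq:vier1} with the $f$ on the left therefore forces one to keep $f$ inside the Riesz-potential bound until the very last Hölder step. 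Sorting out this interplay — which norm sits where, on which region, and in terms of $f$ versus $g$ — is the delicate part; once the regions and exponents are correctly assigned, each individual bound is a one-line Hardy–Littlewood–Sobolev or Hölder estimate.
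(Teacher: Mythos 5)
Your approach is genuinely different from the paper's: you work on the physical side with the singular integral kernel of $D^\alpha$, while the paper works entirely on the Fourier side, using the elementary bound $\bigl||\xi_1+\xi_2|^\alpha-|\xi_2|^\alpha\bigr|\lesssim|\xi_1|^\alpha$ (valid for $\alpha\le 1$), a Plancherel trick converting the convolution bound on $\widehat{[D^\alpha,\psi_R]f}$ into a pointwise product of $g=\ft^{-1}(|\widehat f|)$ with $H=\ft^{-1}(|\xi|^\alpha|\widehat{\psi_R}|)$, and, for \eqref{eq:vier3}, a paraproduct split $|\xi_1|\ll|\xi_2|$ vs.\ $|\xi_1|\gtrsim|\xi_2|$ together with a Coifman--Meyer bilinear multiplier estimate for the low-high piece. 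Both routes are reasonable in principle, but as written your sketch has two genuine gaps.

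First, and most seriously, the appearance of $g=\ft^{-1}(|\widehat f|)$ on the right-hand sides of \eqref{eq:vier1} and \eqref{eq:vier3} is an artifact of the Fourier-side proof, where replacing $\widehat f$ by $|\widehat f|$ is free. A kernel argument produces bounds in terms of $|f|$, not $g$, and there is no pointwise or Lebesgue-norm comparison between the two ($\|f\|_{L^p}\ne\|g\|_{L^p}$ for $p\ne 2$); you explicitly acknowledge this and then leave it unresolved (``sorting out this interplay\,\dots\,is the delicate part''). In fact the cleanest fix --- which you gesture at but do not adopt --- would be to prove the lemma with $\|f\|_{L^{2N/(N-2\alpha)}(|x|\gtrsim R^{1-\e})}$ in place of $\|g\|$, which is what a kernel proof naturally gives and which would serve equally well in the rigidity argument (the paper applies the bound both to $u$ and to $\tilde u=\ft^{-1}|\widehat u|$, and both enjoy the compactness \eqref{comp}). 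But you must actually commit to one version; the current text hedges and proves neither.

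Second, the near/far bookkeeping is off. For \eqref{eq:vier3}, once $|x|\le R^{1-\e}$ you have $\psi_R(x)=1$, so the commutator kernel carries the factor $1-\psi_R(y)$, supported on $|y|>R$; there is \emph{no} contribution from $|y|\lesssim R^{1-\e}$, and the claimed split ``according to whether $|y|\lesssim R^{1-\e}$ or $|y|\gtrsim R^{1-\e}$'' does not arise (you also have the sign backwards, writing $\psi_R(y)$ where $1-\psi_R(y)$ is needed). Carried out correctly, after integrating by parts in $y$ the whole inner-region term is controlled by $\int_{|y|>R}|y|^{-N-\alpha}|f(y)|\,dy$, and Hölder plus the volume of $\{|x|\le R^{1-\e}\}$ already gives a single term of the shape $R^{-\e N/2}\|f\|_{L^{2N/(N-2\alpha)}(|y|>R)}$ with no separate $R^{-\e\alpha/2}\|D^\alpha f\|_2$ contribution --- so the structure you assign to the two terms on the right of \eqref{eq:vier3} is not what the kernel calculation produces. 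A similar issue affects \eqref{eq:vier1}: the near-diagonal piece $|x-y|\le|x|/2$ is supported where $|x|\sim R$ (since $\psi_R(x)\ne\psi_R(y)$ forces one of them into the transition annulus, and $|x|\sim|y|$ there), i.e.\ entirely inside the outer region; your claim that this piece ``produces $R^{-\e\alpha}\|D^\alpha f\|_2$ on the region $|x|\lesssim R^{1-\e}$'' therefore does not apply, and the $R^{-\e\alpha}\|D^\alpha f\|_2$ term is not recovered by your split. Finally, for \eqref{eq:vier2}, the passage ``the $R$'s and $R^{-1}$'s exactly cancelling'' and the reduction to a Riesz potential $I_{1-\alpha}$ of $|\nabla f|$ is not justified: for $\alpha<1$ there is no Sobolev embedding controlling $\|\nabla f\|_{L^p}$ by $\|D^\alpha f\|_{L^2}$, so the integration by parts removing $\nabla$ from $f$ is not optional here but essential, and must be carried out explicitly with the resulting kernel estimated on the support of $\nabla\psi_R$.

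In short: the physical-space strategy is viable and would in fact give a slightly cleaner statement with $f$ in place of $g$, but as presented it neither reaches the lemma as stated nor correctly assigns the near/far pieces to the two terms on the right; the $f$-versus-$g$ issue you flag is a real gap, not a bookkeeping detail.
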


The proof of the lemma will be given in the end of this section. Now
we use it to prove the main result of this section:
\begin{thm}\label{thm:rig}
Assume that $u_0^\pm\in \dot{H}^\alpha$ is such that
\[E_\pm(u_0^\pm)<A_\pm,\quad \norm{D^\alpha u_0^-}_2<\norm{D^\alpha W}_2.\]
Let $u_\pm$ be the solution of \eqref{eq:fNLS}$\pm$ with $u_\pm(0)=u_0^\pm$, with maximal interval of existence $I_\pm$. Assume that there exists $\lambda(t)>0$, for $t\in I_\pm\cap [0,\infty)$, with the property that
\[K=\{v(x,t):v(x,t)=\frac{1}{\lambda(t)}u_\pm(\frac{x}{\lambda(t)},t)\}\]
is precompact in $\dot{H}^\alpha$.
 Then we must have $T_+(u_0)=\infty$, $u_0\equiv 0$.
\end{thm}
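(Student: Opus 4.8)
The plan is to argue by contradiction along the standard Kenig--Merle rigidity scheme, adapted to the nonlocal setting via the localized virial identity of Lemma \ref{lem:lvirial}. Suppose $u_\pm$ is a critical element as in the statement, with $\|u_\pm\|_{S(I_\pm\cap[0,\infty))}=\infty$, so that the rescaled orbit $K$ is precompact in $\dot H^\alpha$. First I would record two consequences of precompactness: a \emph{uniform localization} statement, namely for every $\rho>0$ there is $R(\rho)$ such that
\[
\int_{|x|\geq R(\rho)/\lambda(t)}\big(|D^\alpha u_\pm(t)|^2+|u_\pm(t)|^{2^*}\big)\,dx\leq \rho\qquad\text{for all }t\in I_\pm\cap[0,\infty),
\]
and a \emph{lower bound on the scale}, $\lambda(t)\gtrsim\lambda_0>0$ on compact subintervals, obtained as in \cite{KM} from the local theory (if $\lambda(t)\to 0$ one would contradict the blow-up criterion and the small-data theory). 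In the focusing case the variational estimates \eqref{3.10}--\eqref{3.12} give in addition the coercivity $E_-(u_-(t))\simeq\|D^\alpha u_-(t)\|_2^2\gtrsim 1$ and, crucially, the pointwise-in-time sign $\int|D^\alpha u_-(t)|^2-|u_-(t)|^{2^*}\geq\bar\delta\int|D^\alpha u_-(t)|^2$; in the defocusing case the analogous lower bound $2\alpha\int|D^\alpha u(t)|^2+\frac{p\mu d}{p+2}\int|u|^{p+2}\gtrsim\int|D^\alpha u(t)|^2$ is immediate since $\mu=+1$.

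Next I would exploit Lemma \ref{lem:lvirial}. Set $z_R(t)=\Re\int iu\,x\psi_R\cdot\nabla\bar u\,dx$. The main term on the right-hand side is $2\alpha\int|D^\alpha u|^2\psi_R+\frac{p\mu d}{p+2}\int|u|^{p+2}\psi_R$, which by the coercivity above is $\gtrsim\int|D^\alpha u(t)|^2\gtrsim 1$ once $R\gtrsim R(\rho)/\lambda(t)$ absorbs the tail, i.e. once $R\gtrsim R_0/\lambda_0=:R_*$. The remaining five terms are error terms: the $\tilde\psi_R$ potential term $\frac{p\mu}{p+2}\int|u|^{p+2}\tilde\psi_R$ is supported in $R\leq|x|\leq 2R$ and is $\lesssim\int_{|x|\geq R}|u|^{2^*}$, which by localization is $\leq\rho$ for $R\gtrsim R_*$; the four commutator terms are estimated by Lemma \ref{lem:com}. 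Concretely, $\big|\Re\int D^\alpha u[D^\alpha,\psi_R](x\cdot\nabla\bar u)\,dx\big|$ is bounded, after splitting the $D^\alpha u$ factor into $|x|\lesssim R^{1-\e}$ and $|x|\gtrsim R^{1-\e}$ and using \eqref{eq:vier2}--\eqref{eq:vier3}, by $R^{-\e\alpha/2}\|D^\alpha u\|_2^2+\|g\|_{L^{2^*}(|x|\gtrsim R^{1-\e})}^2+\|D^\alpha u\|_{L^2(|x|\gtrsim R^{1-\e})}\|D^\alpha u\|_2$; since $R^{1-\e}/\lambda(t)\to\infty$ as $R\to\infty$, precompactness (applied to $g$ as well, which also ranges over a precompact set since $\|g\|_{\dot H^\alpha}=\|f\|_{\dot H^\alpha}$) makes the tail pieces small, and the $R^{-\e\alpha}$ pieces are small for $R$ large. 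The same reasoning, using \eqref{eq:vier1}, controls the $[D^\alpha,\psi_R]\bar u$ and $[D^\alpha,\tilde\psi_R]\bar u$ terms. Hence there is $R_*=R_*(\lambda_0,\delta_0,N)$ and a constant $c_0>0$ with
\[
\frac{d}{dt}z_{R_*}(t)\geq c_0\qquad\text{for all }t\in I_\pm\cap[0,\infty).
\]
Meanwhile $|z_{R_*}(t)|\leq \|x\psi_{R_*}\|_{L^\infty}\|u(t)\|_2\|\nabla(\cdot)u\|\lesssim R_*\|u(t)\|_{\dot H^\alpha}^2\lesssim R_*$ is bounded uniformly in $t$ (using Hardy/the localization of the weight and the conserved-in-size $\dot H^\alpha$ norm from \eqref{3.12}, resp. energy conservation in the defocusing case). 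Integrating $\frac{d}{dt}z_{R_*}\geq c_0$ over $[0,T]\subset I_\pm$ gives $z_{R_*}(T)-z_{R_*}(0)\geq c_0 T$, forcing $T\lesssim R_*$, so $T_+(u_0)<\infty$. But then the blow-up criterion says $\|u\|_{S_\alpha([0,T_+))}=\infty$ — which is consistent — however the precompactness of $K$ together with the local theory (self-similar/finite-time blow-up analysis: as $t\uparrow T_+$ one has $\lambda(t)\to\infty$ at a controlled rate, and a rescaling-plus-compactness argument extracts a nonzero $\dot H^\alpha$ limit that must, by the energy trapping, still satisfy $\|u\|_S=\infty$ on a neighborhood, contradicting local existence) rules this out unless $u_0\equiv 0$. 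Thus $T_+(u_0)=\infty$ and then the global-in-time monotonicity $\frac{d}{dt}z_{R_*}\geq c_0$ against the bound $|z_{R_*}|\lesssim R_*$ is an outright contradiction — unless $\int|D^\alpha u(t)|^2\equiv 0$, i.e. $u_0\equiv 0$.

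The main obstacle is making the commutator error terms genuinely smaller than the main coercive term \emph{uniformly in $t$}. This is exactly where the nonlocality bites and where Lemma \ref{lem:com} is designed to help: one must choose the cutoff radius $R$ large (depending only on $\lambda_0$, $\delta_0$, $N$) so that the $R^{-\e\alpha}$-type gains beat the error, while simultaneously the tail norms $\|D^\alpha u\|_{L^2(|x|\gtrsim R^{1-\e})}$ and $\|g\|_{L^{2^*}(|x|\gtrsim R^{1-\e})}$ are small by precompactness of the rescaled orbit — and here one needs the scale lower bound $\lambda(t)\gtrsim\lambda_0$ so that $R^{1-\e}/\lambda(t)\to\infty$ uniformly. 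A secondary technical point is passing from the smooth-solution virial identity to $\dot H^\alpha$ solutions by a density/approximation argument (the whole reason for introducing the localization $\psi_R$), and justifying that $z_R(t)$ is absolutely continuous with the stated derivative — standard but requiring the Strichartz-space membership $D^\alpha u\in W_\alpha$ on compact intervals. Finally, in the focusing case one must keep invoking \eqref{3.10}--\eqref{3.12} to guarantee that the coercivity constant $\bar\delta$ does not degenerate along the flow; this is already furnished by the second part of the variational Lemma.
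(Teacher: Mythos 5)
Your Case-2 scheme — coercivity from the variational lemma, errors controlled via Lemma \ref{lem:com} after splitting $|x|\lesssim R^{1-\e}$ / $|x|\gtrsim R^{1-\e}$, and uniform localization from precompactness of the rescaled orbit under a lower bound $\lambda(t)\gtrsim A_0$ — matches the paper's argument when $T_+(u_0)=\infty$. But there are two genuine gaps.

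First, you never actually rule out $T_+(u_0)<\infty$. Your virial integration gives ``$T\lesssim R_*$, so $T_+(u_0)<\infty$,'' and then you wave at a ``self-similar/finite-time blow-up analysis'' extracting a nonzero limit — but this is not an argument, and in fact the localized virial alone cannot rule out a small maximal time because $R$ must be taken large first to make the errors small. The paper treats $T_+<\infty$ as a \emph{separate case} by a mass-localization argument that does not use the virial at all: one sets $y_R(t)=\int|u|^2\psi_R\,dx$, shows $|y_R'(t)|\lesssim\|D^\alpha u\|_2^2$ via the commutator estimate $\|D^\alpha(fg)-fD^\alpha g\|_2\lesssim\|D^\alpha g\|_2\|g\|_\infty$, uses $\lambda(t)\to\infty$ as $t\uparrow T_+$ to show $\int_{|x|<R}|u(x,t)|^2\,dx\to 0$, and then deduces first $u_0\in L^2$, then $\|u_0\|_2=\lim_{t\to T_+}\|u(t)\|_2=0$, so $u\equiv 0$ — a contradiction with $T_+<\infty$. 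This entire step is missing from your proposal.

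Second, your uniform bound on $z_R(t)$ is incorrect as written. You bound $|z_R(t)|\leq\|x\psi_R\|_\infty\|u(t)\|_2\|\nabla(\cdot)u\|$, but neither $\|u(t)\|_2$ nor $\|\nabla u(t)\|_2$ is controlled when $u(t)\in\dot H^\alpha$ with $\alpha<1$ (the solution need not even be in $L^2$). The paper's estimate uses Sobolev multiplication/fractional Leibniz to pair $D^{1-\alpha}$ against $D^{\alpha-1}$:
\begin{equation*}
|I_R(t)|\lesssim\|D^{1-\alpha}(u\,x\psi_R)\|_2\,\|D^{\alpha-1}\nabla\bar u\|_2\lesssim\|D^\alpha u\|_2\,\|D^{1+\frac N2-2\alpha}(x\psi_R)\|_2\,\|D^\alpha u\|_2\lesssim R^{2\alpha}\|D^\alpha u_0\|_2^2,
\end{equation*}
giving the power $R^{2\alpha}$ (not $R$) and, more importantly, living entirely inside $\dot H^\alpha$. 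Without this the final integration-in-$t$ contradiction is not justified. A lesser point: you claim the lower bound $\lambda(t)\gtrsim\lambda_0$ ``on compact subintervals,'' but the virial argument in the $T_+=\infty$ case needs it uniformly in $t\in[0,\infty)$; the paper simply reduces to $\lambda(t)\geq A_0$ globally by the Kenig--Merle argument and then proceeds.
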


\begin{proof}[Proof of Theorem \ref{thm:rig}]
We only prove the focusing case, since the defocusing case follows
in a similar way.  Assume $I_-=(-T_-,T_+)$. It suffices to prove
this theorem under the assumption that $\lambda(t)\geq A_0$ for some
$A_0>0$ for all $t$, since the general case follows similarly as in
\cite{KM}. The proof splits in two cases.

{\bf Case 1. $T_+(u_0)<+\infty$}

With the same proof as in \cite{KM}, we have
$\lambda(t)\rightarrow\infty$ as $t\uparrow T_+(u_0)$. We define
\[y_R(t)=\int |u(x,t)|^2\psi_R(x)dx, \quad t\in [0,T_+).\]
Then we have
\begin{align*}
y_R'(t)=&-2\im \int D^{2\alpha}(u)\cdot \bar u\psi_R(x)dx\\
=&-2\im \int D^{\alpha}u\cdot [D^\alpha(\bar u\psi_R)- \bar u\cdot
D^\alpha\psi_R]dx-2\im \int D^{\alpha}u\cdot \bar u\cdot
D^\alpha\psi_Rdx.
\end{align*}
By the commutator estimates $\norm{D^\al(fg)-fD^\al g}_2\les
\norm{D^\al g}_2\norm{g}_\infty$, $\al\in (0,1)$, we get
\begin{align*}
|y_R'(t)|\les& \norm{D^{\alpha}u}_2\norm{D^\alpha(\bar u\psi_R)- \bar u\cdot D^\alpha\psi_R}_2+\norm{D^{\alpha}u}_2\norm{u}_{\frac{2n}{n-2\alpha}}\norm{D^\alpha\psi_R}_{\frac{n}{\alpha}}\\
\les& \norm{D^{\alpha}u}_2^2.
\end{align*}

Next, we show: for all $R>0$,
\begin{align}\label{eq:0limit}
\int_{|x|<R}|u(x,t)|^2dx\to 0, \quad \mbox{as } t\to T_+(u_0).
\end{align}
In fact, $u(y,t)=\lambda(t)^\frac{N-2\alpha}{2}v(\lambda(t)y,t)$ so
that
\begin{align*}
&\int_{|x|<R}|u(x,t)|^2dx\\
=&\lambda(t)^{-2\alpha}\int_{|y|<R\lambda(t)}|v(y,t)|^2dy\\
=&\lambda(t)^{-2\alpha}\int_{|y|<\varepsilon R\lambda(t)}|v(y,t)|^2dy+\lambda(t)^{-2\alpha}\int_{\varepsilon R\lambda(t)<|y|<R\lambda(t)}|v(y,t)|^2dy\\
:=&I+II.
\end{align*}
By H\"older and Sobolev, we have $$I\lesssim
\lambda(t)^{-2\alpha}\|v\|_{L^\frac{2N}{N-2\alpha}}^2(\varepsilon
R\lambda(t))^{2\alpha}\lesssim (\varepsilon R)^{2\alpha}\|D^\alpha
W\|_2^2,$$ while $$II\lesssim
\lambda(t)^{-2\alpha}(R\lambda(t))^{2\alpha}\|v\|_{L^\frac{2N}{N-2\alpha}(|x|\geq
\varepsilon R\lambda(t))}^2\rightarrow0,\ \mathrm{as}\ t\rightarrow
T_+(u_0).$$ Thus \eqref{eq:0limit} follows.

Therefore, we have
\begin{equation*}
|y_R(0)-y_R(T_+(u_0))|\lesssim T_+(u_0)\|D^\alpha W\|_2^2,
\end{equation*}
which implies $$y_R(0)\lesssim T_+(u_0)\|D^\alpha W\|_2^2.$$ Then
letting $R\rightarrow\infty$, we obtain that $u_0\in L^2(\Bbb R^N)$.
Arguing as before, $$|y_R(t)-y_R(T_+(u_0))|\lesssim
(T_+(u_0)-t)\|D^\alpha W\|_2^2.$$ So $$|y_R(t)|\lesssim
(T_+(u_0)-t)\|D^\alpha W\|_2^2.$$ Letting $R\rightarrow\infty$, we
see that $$\|u(t)\|_2^2\lesssim (T_+(u_0)-t)\|D^\alpha W\|_2^2$$ and
so by the conservation of the $L^2$ norm $\|u_0\|_2=\|u(t)\|_2\to 0,
 t\to T_+(u_0)$. But that $u\equiv0$ contradicting
$T_+(u_0)<+\infty$.

{\bf Case 2. $T_+(u_0)=+\infty$}

In this case we use the localized virial identity. Let $u(y,t)=\lambda(t)^\frac{N-2\alpha}{2}v(\lambda(t)y,t)$, then
\begin{align*}
\int_{|y|>R(\varepsilon)}|D^\alpha u(y,t)|^2dy=&\int_{|y|>R(\varepsilon)}\lambda(t)^N|D^\alpha v(\lambda(t)y,t)|^2dy\\
=&\int_{|z|>\lambda(t)R(\varepsilon)}|D^\alpha v(z,t)|^2dx\\
\leq&\int_{|z|\geq A_0R(\varepsilon)}|D^\alpha v(z,t)|^2dz\\
\lesssim& \ \varepsilon.\ (\mathrm{by\ the\ precompactness\ of\ K})
\end{align*}
By similar arguments, we have for any $\varepsilon>0$, there exists $R(\varepsilon)$ such that \begin{equation}\label{compu}\int_{|x|>R(\varepsilon)}\Big(|D^\alpha u(x,t)|^2+|u(x,t)|^\frac{2N}{N-2\alpha}+\frac{|u(x,t)|^2}{|x|^{2\alpha}}\Big)dx<\varepsilon\end{equation}
Let $\tilde u=\ft^{-1}_x |\ft u(\xi,t)|$. By Plancherel theorem we know $\tilde u$ has the same compactness as $u$. Thus we have: for each $\e>0$, there exists $R(\e)>0$ such that, for all $t\in [0,\infty)$, we have
\begin{equation}\label{comp}\int_{|x|>R(\e)}\Big(|D^\alpha \tilde u(x,t)|^2+|\tilde u(x,t)|^\frac{2N}{N-2\alpha}+\frac{|\tilde u(x,t)|^2}{|x|^{2\alpha}}\Big)dx<\e.\end{equation}

Next, we consider
\[I_R(t)=\re \int iu x\psi_R\cdot \nabla \bar u dx.\]
By Sobolev multiplication laws, we have
\begin{align*}
|I_R(t)|\les &\norm{D^{1-\alpha}(u x\psi_R)}_2\cdot \norm{D^{\alpha-1}\nabla \bar u}_2\\
\les &\norm{D^{\alpha}u}_2\cdot \norm{D^{1+\frac{d}{2}-2\alpha}(x\psi_R)}_2\cdot\norm{D^{\alpha}u}_2\les R^{2\alpha} \cdot \norm{D^{\alpha}u_0}_2^2.
\end{align*}
On the other hand, by Lemma \ref{lem:lvirial} and Lemma \ref{lem:com}, we have
\begin{align}
I_R'(t)=&2\alpha\int|D^\alpha u|^2dx-2\alpha\int|u|^\frac{2N}{N-2\alpha}dx\label{eq:v1}\\
&+2\alpha\int(|D^\alpha u|^2-|u|^\frac{2N}{N-2\alpha})(\psi_R-1)dx-\frac{2\alpha}{N}\int|u|^\frac{2N}{N-2\alpha}\tilde{\psi}_Rdx\label{eq:v2}\\
&+2\re \int D^\alpha u[D^\alpha, \psi_R](x\cdot\nabla\bar{u})dx\label{eq:v3}\\
&+d\re \int D^\alpha u[D^\alpha, \psi_R]\bar{u}dx+\re\int D^\alpha u[D^\alpha, \tilde{\psi}_R]\tilde{u}dx.\label{eq:v4}
\end{align}

By the variational estimates, we have$$(\ref{eq:v1})\geq C_\delta
\|D^\alpha u_0\|^2_2.$$ If $u_0\ne 0$, then fix $0<\e\ll \|D^\alpha
u_0\|^2_2$. For (\ref{eq:v2}), by (\ref{compu}) we get that
$$(\ref{eq:v2})\les \varepsilon$$ for $R$ sufficiently large. The
first term of (\ref{eq:v3}) can be estimated as follows
\begin{align*}
&|(\ref{eq:v3})|\\
\lesssim& |\int_{|x|\lesssim R^{1-\varepsilon}}D^\alpha u[D^\alpha, \psi_R](x\cdot\nabla\bar{u})dx|+|\int_{|x|\gtrsim R^{1-\varepsilon}}D^\alpha u[D^\alpha, \psi_R](x\cdot\nabla\bar{u})dx|\\
\lesssim&\|D^\alpha u\|_2\|[D^\alpha, \psi_R](x\cdot\nabla\bar{u})\|_{L^2(|x|\lesssim R^{1-\varepsilon})}+\|D^\alpha u\|_{L^2(|x|\gtrsim R^{1-\varepsilon})}\|[D^\alpha, \psi_R](x\cdot\nabla\bar{u})\|_2\\
\lesssim&R^{-\frac{\varepsilon\alpha}{2}}\|D^\alpha u\|_2+\|\tilde{u}\|_{L^\frac{2N}{N-2\alpha}(|x|\gtrsim R^{1-\varepsilon})}+\|D^\alpha u\|_{L^2(|x|\gtrsim R^{1-\varepsilon})}\|D^\alpha u\|_{L^2}
\end{align*}
where the last inequality follows from Lemma 5.3. Therefore,
$(\ref{eq:v3})\lesssim \varepsilon$ if $R$ is sufficiently large.
The smallness of (\ref{eq:v4}) can be obtained similarly. Thus
$$|I_R'(t)|\gtrsim \int|D^\alpha u_0|^2.$$
Integrating in $t$, we get $I_R(t)-I_R(0)\gtrsim t\int|D^\alpha
u_0|^2$, but we also have $|I_R(t)-I_R(0)|\lesssim R^2\int|D^\alpha
u_0|^2$, which is a contradiction for $t$ large. Thus $u_0\equiv 0$
and the theorem is proved.
\end{proof}

In the end, we give the proof of Lemma \ref{lem:com}.

\begin{proof}[Proof of Lemma \ref{lem:com}]
First we show \eqref{eq:vier1}. Using Fourier transform, we have
\begin{align*}
|\ft([D^\alpha,\psi_R]f)(\xi)|\les&
|\int_{\xi=\xi_1+\xi_2}(|\xi_1+\xi_2|^\alpha-|\xi_2|^\alpha)\wh{\psi_R}(\xi_1)\wh{f}(\xi_2)|\\
\les&\int_{\xi=\xi_1+\xi_2} |\xi_1|^\alpha
|\wh{\psi_R}(\xi_1)|\cdot|\wh{f}(\xi_2)|.
\end{align*}
Then we get
\begin{align*}
\norm{[D^\alpha,\psi_R]f}_2\les& \norm{\ft^{-1}(|\xi_1|^\alpha
|\wh{\psi_R}(\xi_1)|)\cdot g}_2\\
\les& \norm{\ft^{-1}(|\xi_1|^\alpha
|\wh{\psi_R}(\xi_1)|)}_{\frac{n}{\alpha}}\cdot
\norm{g}_{L^{\frac{2n}{n-2\alpha}}(|x|\ges
R^{1-\e})}\\
&\quad+\norm{\ft^{-1}(|\xi_1|^\alpha
|\wh{\psi_R}(\xi_1)|)}_{L^{\frac{n}{\alpha}}(|x|\les R^{1-\e})}\cdot\norm{g}_{L^{\frac{2n}{n-2\alpha}}}\\
\les& \norm{g}_{L^{\frac{2n}{n-2\alpha}}(|x|\ges R^{1-\e})}+R^{-\e
\alpha}\norm{D^\alpha f}_{L^2}
\end{align*}
where in the last inequality we used the fact that
$\norm{\ft^{-1}(|\xi_1|^\alpha
|\wh{\psi_R}(\xi_1)|)}_{\frac{n}{\alpha}}\leq C$,
$|\ft^{-1}(|\xi_1|^\alpha |\wh{\psi_R}(\xi_1)|)|\les R^{-\alpha}$
and the Sobolev embedding.

Next, we prove \eqref{eq:vier2}. Direct computations show that
\begin{align*}
&\ft([D^\alpha,\psi_R]x\cdot\nabla f)(\xi) \\
=&-
\int(|\xi|^\alpha-|\xi_2|^\alpha)\wh{\psi_R}(\xi-\xi_2)\nabla_{\xi_2}\cdot(\xi_2\wh{f}(\xi_2))d\xi_2\\
=&\int_{\xi=\xi_1+\xi_2}-\alpha|\xi_2|^{\alpha}\wh{\psi_R}(\xi_1)\wh{f}(\xi_2)+i(|\xi_1+\xi_2|^\alpha-|\xi_2|^\alpha)\wh{x\psi_R}(\xi_1)\cdot
\xi_2\wh{f}(\xi_2).
\end{align*}
Thus we get
\begin{align*}
|\ft([D^\alpha,\psi_R]x\cdot\nabla
f)(\xi)|\les&\int_{\xi=\xi_1+\xi_2}
|\xi_2|^{\alpha}(|\wh{\psi_R}(\xi_1)|+|\wh{x\psi_R}(\xi_1)|\cdot|\xi_1|)\cdot|\wh{f}(\xi_2)|
\end{align*}
and then by Plancherel's equality
\begin{align*}
\norm{[D^\alpha,\psi_R]x\cdot\nabla f}_{L^2}\les& \norm{D^\alpha
f}_{L^2}.
\end{align*}

Finally, we prove \eqref{eq:vier3}. We have
\begin{align*}
&\ft([D^\alpha,\psi_R]x\cdot\nabla f)(\xi)\\
=&\int_{\xi=\xi_1+\xi_2,|\xi_1|\ll
|\xi_2|}-\alpha|\xi_2|^{\alpha}\wh{\psi_R}(\xi_1)\wh{f}(\xi_2)+i(|\xi_1+\xi_2|^\alpha-|\xi_2|^\alpha)\wh{x\psi_R}(\xi_1)\cdot
\xi_2\wh{f}(\xi_2)\\
&+\int_{\xi=\xi_1+\xi_2,|\xi_1|\ges
|\xi_2|}-\alpha|\xi_2|^{\alpha}\wh{\psi_R}(\xi_1)\wh{f}(\xi_2)+i(|\xi_1+\xi_2|^\alpha-|\xi_2|^\alpha)\wh{x\psi_R}(\xi_1)\cdot
\xi_2\wh{f}(\xi_2) \\
:=&\ft[M(f)]+\ft[R(f)].
\end{align*}
As before, we have
\begin{align*}
|\ft[R(f)](\xi)|\les&\int_{\xi=\xi_1+\xi_2}
|\xi_1|^{\alpha}(|\wh{\psi_R}(\xi_1)|+|\wh{x\psi_R}(\xi_1)|\cdot|\xi_1|)\cdot|\wh{f}(\xi_2)|
\end{align*}
and then as \eqref{eq:vier1} we get
\[\norm{Rf}_{2}\les \norm{g}_{L^{\frac{2n}{n-2\alpha}}(|x|\ges
R^{1-\e})}+R^{-\e \alpha}\norm{D^\alpha f}_{L^2}.\]

To estimate $M(f)$, we need to exploit a cancelation. Since
$|\xi_1|\ll |\xi_2|$, by fundamental theorem of calculus we have
\[|\xi_1+\xi_2|^\alpha-|\xi_2|^\alpha=\int_0^1\frac{d}{dt}|t\xi_1+\xi_2|^\alpha dt
=\int_0^1\alpha
|t\xi_1+\xi_2|^{\alpha-1}\frac{t\xi_1+\xi_2}{|t\xi_1+\xi_2|}dt\cdot
\xi_1.\] Thus we get
\begin{align}
\ft[M(f)]=&\int_{|\xi_1|\ll
|\xi_2|}-\alpha|\xi_2|^{\alpha}\wh{\psi_R}(\xi_1)\wh{f}(\xi_2)\nonumber\\
&+\int_{|\xi_1|\ll |\xi_2|}\int_0^1\alpha
|t\xi_1+\xi_2|^{\alpha-1}\frac{t\xi_1+\xi_2}{|t\xi_1+\xi_2|}dt\cdot
i\xi_1\wh{x\psi_R}(\xi_1)\cdot \xi_2\wh{f}(\xi_2)\label{eq:Mf}
\end{align}
Denote $\xi_s=(\xi_{s,1},\cdots,\xi_{s,n}), s=1,2$, then the second
term equals to
\begin{align*}
&\int_{|\xi_1|\ll |\xi_2|}\sum_{j,k=1}^n\int_0^1\alpha
|t\xi_1+\xi_2|^{\alpha-1}\frac{t\xi_{1,k}+\xi_{2,k}}{|t\xi_1+\xi_2|}dt\cdot
i\xi_{1,k}\wh{x_j\psi_R}(\xi_1)\cdot \xi_{2,j}\wh{f}(\xi_2)\\
=&\int_{|\xi_1|\ll |\xi_2|}\int_0^1\alpha
|t\xi_1+\xi_2|^{\alpha-1}\frac{t\xi_1+\xi_2}{|t\xi_1+\xi_2|}dt\cdot
\xi_2\wh{\psi_R}(\xi_1)\wh{f}(\xi_2)\\
&+\int_{|\xi_1|\ll |\xi_2|}\int_0^1\alpha
|t\xi_1+\xi_2|^{\alpha-1}\frac{t\xi_1+\xi_2}{|t\xi_1+\xi_2|}dt\cdot
 \wh{x\otimes \nabla {\psi}_R}(\xi_1)\cdot \xi_2\wh{f}(\xi_2).
\end{align*}
Thus, we get
\begin{align*}
\ft[M(f)]=&\int_{|\xi_1|\ll |\xi_2|}(\int_0^1\alpha
|t\xi_1+\xi_2|^{\alpha-1}\frac{t\xi_1+\xi_2}{|t\xi_1+\xi_2|}dt\cdot
\xi_2-\alpha|\xi_2|^{\alpha})\wh{\psi_R}(\xi_1)\wh{f}(\xi_2)\\
&+\int_{|\xi_1|\ll |\xi_2|}i\int_0^1\alpha
|t\xi_1+\xi_2|^{\alpha-1}\frac{t\xi_1+\xi_2}{|t\xi_1+\xi_2|}dt\cdot
|\xi_2|^{-\alpha}\xi_2 \wh{\tilde{\psi}_R}(\xi_1)\wh{D^\alpha
f}(\xi_2)\\
=&\ft[I]+\ft[II].
\end{align*}
For I, by mean value formula, we have
\[|I|\les \int_{|\xi_1|\ll
|\xi_2|}\alpha^2|\xi_2|^{\alpha-1}|\xi_1|\cdot|\wh{\psi_R}(\xi_1)|\cdot|\wh{f}(\xi_2)|\]
and then
\[\norm{I}_2\les R^{-\alpha}\norm{f}_2.\]
For II, we see
\[II=\int K(x-y_1,x-y_2)\tilde{\psi}_R(y_1)D^\alpha f(y_2)dy_1dy_2\]
where $K$ is the kernel for the bilinear multiplier
\[K(x,y)=\int e^{i(x\xi_1+y\xi_2)}m(\xi_1,\xi_2)d\xi_1d\xi_2\]
with the symbol
\[m(\xi_1,\xi_2)=\int_0^1\alpha
|t\xi_1+\xi_2|^{\alpha-1}\frac{t\xi_1+\xi_2}{|t\xi_1+\xi_2|}dt\cdot
|\xi_2|^{-\alpha}\xi_2 \cdot 1_{|\xi_1|\ll |\xi_2|}.\] It is easy to
see from direct computations that $m$ satisfy the Coifman-Meyer's
H\"ormander-type condition, and then
\[|K(x-y_1,x-y_2)|\les (|x-y_1|+|x-y_2|)^{-2n}.\]
If $|y_1|\sim R, |x|\les R^{1-\e}$, then $|K(x-y_1,x-y_2)|\les
R^{-2n}$. Thus we get
\[\norm{II}_{L^2(|x|\les R^{1-\e})}\les R^{-\e/2}\norm{D^\alpha f}_2.\]
Therefore, the lemma is proved.
\end{proof}
%
%\section*{Appendix: collections of some proofs}
%
%\begin{proof}[Proof of Theorem \ref{thm}]
%
%\end{proof}

\end{document}